\newtheorem{Remark}{Remark}
\newtheorem{Corollary}{Corollary}
\newenvironment{Proof}{\noindent{\em Proof:\/}}{\hfill $\Box$\par}
\newtheorem{Theorem}{Theorem}
\newtheorem{Lemma}{Lemma}
\newtheorem{Assumption}{Assumption}
\let\pdfoutput=\undefined\fi
\chardef\@x10\chardef\@xv60
\def\tcitime{
\def\@time{%
  \@minute\time\@hour\@minute\divide\@hour\@xv
  \ifnum\@hour<\@x 0\fi\the\@hour:%
  \multiply\@hour\@xv\advance\@minute-\@hour
  \ifnum\@minute<\@x 0\fi\the\@minute
  }}%
\def\x@hyperref#1#2#3{%
   \catcode`\~ = 12
   \catcode`\$ = 12
   \catcode`\_ = 12
   \catcode`\# = 12
   \catcode`\& = 12
   \catcode`\% = 12
   \y@hyperref{#1}{#2}{#3}%
}
\def\y@hyperref#1#2#3#4{%
   #2\ref{#4}#3
   \catcode`\~ = 13
   \catcode`\$ = 3
   \catcode`\_ = 8
   \catcode`\# = 6
   \catcode`\& = 4
   \catcode`\% = 14
}
\def\QCTOpt[#1]#2{%
  \def\QCTOptB{#1}
  \def\QCTOptA{#2}
}
\def\QCTNOpt#1{%
  \def\QCTOptA{#1}
  \let\QCTOptB\empty
}
\def\Qct{%
  \@ifnextchar[{%
    \QCTOpt}{\QCTNOpt}
}
\def\QCBOpt[#1]#2{%
  \def\QCBOptB{#1}%
  \def\QCBOptA{#2}%
}
\def\QCBNOpt#1{%
  \def\QCBOptA{#1}%
  \let\QCBOptB\empty
}
\def\Qcb{%
  \@ifnextchar[{%
    \QCBOpt}{\QCBNOpt}%
}
\def\PrepCapArgs{%
  \ifx\QCBOptA\empty
    \ifx\QCTOptA\empty
      {}%
    \else
      \ifx\QCTOptB\empty
        {\QCTOptA}%
      \else
        [\QCTOptB]{\QCTOptA}%
      \fi
    \fi
  \else
    \ifx\QCBOptA\empty
      {}%
    \else
      \ifx\QCBOptB\empty
        {\QCBOptA}%
      \else
        [\QCBOptB]{\QCBOptA}%
      \fi
    \fi
  \fi
}
\def\GRAPHICSPS#1{%
 \ifcase\GRAPHICSTYPE
   \special{ps: #1}%
 \or
   \special{language "PS", include "#1"}%
 \fi
}%
\def\graffile#1#2#3#4{%
    \bgroup
	   \@inlabelfalse
       \leavevmode
       \@ifundefined{bbl@deactivate}{\def~{\string~}}{\activesoff}%
        \raise -#4 \BOXTHEFRAME{%
           \hbox to #2{\raise #3\hbox to #2{\null #1\hfil}}}%
    \egroup
}%
\def\draftbox#1#2#3#4{%
 \leavevmode\raise -#4 \hbox{%
  \frame{\rlap{\protect\tiny #1}\hbox to #2%
   {\vrule height#3 width\z@ depth\z@\hfil}%
  }%
 }%
}%
\let\nographics=\@msidraft
\newif\ifwasdraft
\def\GRAPHIC#1#2#3#4#5{%
   \ifnum\@msidraft=\@ne\draftbox{#2}{#3}{#4}{#5}%
   \else\graffile{#1}{#3}{#4}{#5}%
   \fi
}
\def\addtoLaTeXparams#1{%
    \edef\LaTeXparams{\LaTeXparams #1}}%
\newif\ifBoxFrame \BoxFramefalse
\newif\ifOverFrame \OverFramefalse
\newif\ifUnderFrame \UnderFramefalse
\def\BOXTHEFRAME#1{%
   \hbox{%
      \ifBoxFrame
         \frame{#1}%
      \else
         {#1}%
      \fi
   }%
}
\def\doFRAMEparams#1{\BoxFramefalse\OverFramefalse\UnderFramefalse\readFRAMEparams#1\end}%
\def\readFRAMEparams#1{%
 \ifx#1\end%
  \let\next=\relax
  \else
  \ifx#1i\dispkind=\z@\fi
  \ifx#1d\dispkind=\@ne\fi
  \ifx#1f\dispkind=\tw@\fi
  \ifx#1t\addtoLaTeXparams{t}\fi
  \ifx#1b\addtoLaTeXparams{b}\fi
  \ifx#1p\addtoLaTeXparams{p}\fi
  \ifx#1h\addtoLaTeXparams{h}\fi
  \ifx#1X\BoxFrametrue\fi
  \ifx#1O\OverFrametrue\fi
  \ifx#1U\UnderFrametrue\fi
  \ifx#1w
    \ifnum\@msidraft=1\wasdrafttrue\else\wasdraftfalse\fi
    \@msidraft=\@ne
  \fi
  \let\next=\readFRAMEparams
  \fi
 \next
 }%
\def\IFRAME#1#2#3#4#5#6{%
      \bgroup
      \let\QCTOptA\empty
      \let\QCTOptB\empty
      \let\QCBOptA\empty
      \let\QCBOptB\empty
      #6%
      \parindent=0pt
      \leftskip=0pt
      \rightskip=0pt
      \setbox0=\hbox{\QCBOptA}%
      \@tempdima=#1\relax
      \ifOverFrame
          \typeout{This is not implemented yet}%
          \show\HELP
      \else
         \ifdim\wd0>\@tempdima
            \advance\@tempdima by \@tempdima
            \ifdim\wd0 >\@tempdima
               \setbox1 =\vbox{%
                  \unskip\hbox to \@tempdima{\hfill\GRAPHIC{#5}{#4}{#1}{#2}{#3}\hfill}%
                  \unskip\hbox to \@tempdima{\parbox[b]{\@tempdima}{\QCBOptA}}%
               }%
               \wd1=\@tempdima
            \else
               \textwidth=\wd0
               \setbox1 =\vbox{%
                 \noindent\hbox to \wd0{\hfill\GRAPHIC{#5}{#4}{#1}{#2}{#3}\hfill}\\%
                 \noindent\hbox{\QCBOptA}%
               }%
               \wd1=\wd0
            \fi
         \else
            \ifdim\wd0>0pt
              \hsize=\@tempdima
              \setbox1=\vbox{%
                \unskip\GRAPHIC{#5}{#4}{#1}{#2}{0pt}%
                \break
                \unskip\hbox to \@tempdima{\hfill \QCBOptA\hfill}%
              }%
              \wd1=\@tempdima
           \else
              \hsize=\@tempdima
              \setbox1=\vbox{%
                \unskip\GRAPHIC{#5}{#4}{#1}{#2}{0pt}%
              }%
              \wd1=\@tempdima
           \fi
         \fi
         \@tempdimb=\ht1
         \advance\@tempdimb by -#2
         \advance\@tempdimb by #3
         \leavevmode
         \raise -\@tempdimb \hbox{\box1}%
      \fi
      \egroup%
}%
\def\DFRAME#1#2#3#4#5{%
  \vspace\topsep
  \hfil\break
  \bgroup
     \leftskip\@flushglue
	 \rightskip\@flushglue
	 \parindent\z@
	 \parfillskip\z@skip
     \let\QCTOptA\empty
     \let\QCTOptB\empty
     \let\QCBOptA\empty
     \let\QCBOptB\empty
	 \vbox\bgroup
        \ifOverFrame 
           #5\QCTOptA\par
        \fi
        \GRAPHIC{#4}{#3}{#1}{#2}{\z@}%
        \ifUnderFrame 
           \break#5\QCBOptA
        \fi
	 \egroup
  \egroup
  \vspace\topsep
  \break
}%
\def\FFRAME#1#2#3#4#5#6#7{%
  \@ifundefined{floatstyle}
    {
     \begin{figure}[#1]%
    }
    {
	 \ifx#1h
      \begin{figure}[H]%
	 \else
      \begin{figure}[#1]%
	 \fi
	}
  \let\QCTOptA\empty
  \let\QCTOptB\empty
  \let\QCBOptA\empty
  \let\QCBOptB\empty
  \ifOverFrame
    #4
    \ifx\QCTOptA\empty
    \else
      \ifx\QCTOptB\empty
        \caption{\QCTOptA}%
      \else
        \caption[\QCTOptB]{\QCTOptA}%
      \fi
    \fi
    \ifUnderFrame\else
      \label{#5}%
    \fi
  \else
    \UnderFrametrue%
  \fi
  \begin{center}\GRAPHIC{#7}{#6}{#2}{#3}{\z@}\end{center}%
  \ifUnderFrame
    #4
    \ifx\QCBOptA\empty
      \caption{}%
    \else
      \ifx\QCBOptB\empty
        \caption{\QCBOptA}%
      \else
        \caption[\QCBOptB]{\QCBOptA}%
      \fi
    \fi
    \label{#5}%
  \fi
  \end{figure}%
 }%
\def\makeactives{
  \catcode`\"=\active
  \catcode`\;=\active
  \catcode`\:=\active
  \catcode`\'=\active
  \catcode`\~=\active
}
   \gdef\activesoff{%
      \def"{\string"}%
      \def;{\string;}%
      \def:{\string:}%
      \def'{\string'}%
      \def~{\string~}%
    }
\def\FRAME#1#2#3#4#5#6#7#8{%
 \bgroup
 \ifnum\@msidraft=\@ne
   \wasdrafttrue
 \else
   \wasdraftfalse%
 \fi
 \def\LaTeXparams{}%
 \dispkind=\z@
 \def\LaTeXparams{}%
 \doFRAMEparams{#1}%
 \ifnum\dispkind=\z@\IFRAME{#2}{#3}{#4}{#7}{#8}{#5}\else
  \ifnum\dispkind=\@ne\DFRAME{#2}{#3}{#7}{#8}{#5}\else
   \ifnum\dispkind=\tw@
    \edef\@tempa{\noexpand\FFRAME{\LaTeXparams}}%
    \@tempa{#2}{#3}{#5}{#6}{#7}{#8}%
    \fi
   \fi
  \fi
  \ifwasdraft\@msidraft=1\else\@msidraft=0\fi{}%
  \egroup
 }%
\def\TEXUX#1{"texux"}
\long\def\QQQ#1#2{%
     \long\expandafter\def\csname#1\endcsname{#2}}%
\long\def\QQA#1#2{}%
\def\QTR#1#2{{\csname#1\endcsname {#2}}}%
\def\EXPAND#1[#2]#3{}%
\def\NOEXPAND#1[#2]#3{}%
\def\LaTeXparent#1{}%
\def\ChildStyles#1{}%
\def\ChildDefaults#1{}%
\def\QTagDef#1#2#3{}%
  \providecommand{\UNICODE}[2][]{\protect\rule{.1in}{.1in}}
  \providecommand{\U}[1]{\protect\rule{.1in}{.1in}}
\def\QQfnmark#1{\footnotemark}
 \def\abstract{%
  \if@twocolumn
   \section*{Abstract (Not appropriate in this style!)}%
   \else \small 
   \begin{center}{\bf Abstract\vspace{-.5em}\vspace{\z@}}\end{center}%
   \quotation 
   \fi
  }%
   \def\registered{\relax\ifmmode{}\r@gistered
                    \else$\m@th\r@gistered$\fi}%
 \def\r@gistered{^{\ooalign
  {\hfil\raise.07ex\hbox{$\scriptstyle\rm\text{R}$}\hfil\crcr
  \mathhexbox20D}}}}{}%
\newdimen\theight
\def\newfmtname{LaTeX2e}
  \DeclareOldFontCommand{\rm}{\normalfont\rmfamily}{\mathrm}
  \DeclareOldFontCommand{\sf}{\normalfont\sffamily}{\mathsf}
  \DeclareOldFontCommand{\tt}{\normalfont\ttfamily}{\mathtt}
  \DeclareOldFontCommand{\bf}{\normalfont\bfseries}{\mathbf}
  \DeclareOldFontCommand{\it}{\normalfont\itshape}{\mathit}
  \DeclareOldFontCommand{\sl}{\normalfont\slshape}{\@nomath\sl}
  \DeclareOldFontCommand{\sc}{\normalfont\scshape}{\@nomath\sc}
\def\alpha{{\Greekmath 010B}}%
\def\beta{{\Greekmath 010C}}%
\def\gamma{{\Greekmath 010D}}%
\def\delta{{\Greekmath 010E}}%
\def\epsilon{{\Greekmath 010F}}%
\def\zeta{{\Greekmath 0110}}%
\def\eta{{\Greekmath 0111}}%
\def\theta{{\Greekmath 0112}}%
\def\iota{{\Greekmath 0113}}%
\def\kappa{{\Greekmath 0114}}%
\def\lambda{{\Greekmath 0115}}%
\def\mu{{\Greekmath 0116}}%
\def\nu{{\Greekmath 0117}}%
\def\xi{{\Greekmath 0118}}%
\def\pi{{\Greekmath 0119}}%
\def\rho{{\Greekmath 011A}}%
\def\sigma{{\Greekmath 011B}}%
\def\tau{{\Greekmath 011C}}%
\def\upsilon{{\Greekmath 011D}}%
\def\phi{{\Greekmath 011E}}%
\def\chi{{\Greekmath 011F}}%
\def\psi{{\Greekmath 0120}}%
\def\omega{{\Greekmath 0121}}%
\def\varepsilon{{\Greekmath 0122}}%
\def\vartheta{{\Greekmath 0123}}%
\def\varpi{{\Greekmath 0124}}%
\def\varrho{{\Greekmath 0125}}%
\def\varsigma{{\Greekmath 0126}}%
\def\varphi{{\Greekmath 0127}}%
\def\nabla{{\Greekmath 0272}}
\def\FindBoldGroup{%
   {\setbox0=\hbox{$\mathbf{x\global\edef\theboldgroup{\the\mathgroup}}$}}%
}
\def\Greekmath#1#2#3#4{%
    \if@compatibility
        \ifnum\mathgroup=\symbold
           \mathchoice{\mbox{\boldmath$\displaystyle\mathchar"#1#2#3#4$}}%
                      {\mbox{\boldmath$\textstyle\mathchar"#1#2#3#4$}}%
                      {\mbox{\boldmath$\scriptstyle\mathchar"#1#2#3#4$}}%
                      {\mbox{\boldmath$\scriptscriptstyle\mathchar"#1#2#3#4$}}%
        \else
           \mathchar"#1#2#3#4%
        \fi 
    \else 
        \FindBoldGroup
        \ifnum\mathgroup=\theboldgroup 
           \mathchoice{\mbox{\boldmath$\displaystyle\mathchar"#1#2#3#4$}}%
                      {\mbox{\boldmath$\textstyle\mathchar"#1#2#3#4$}}%
                      {\mbox{\boldmath$\scriptstyle\mathchar"#1#2#3#4$}}%
                      {\mbox{\boldmath$\scriptscriptstyle\mathchar"#1#2#3#4$}}%
        \else
           \mathchar"#1#2#3#4%
        \fi     	    
	  \fi}
\newif\ifGreekBold  \GreekBoldfalse
\let\SAVEPBF=\pbf
\def\pbf{\GreekBoldtrue\SAVEPBF}%
  \newcounter{equationnumber}  
  \def\mathletters{%
     \addtocounter{equation}{1}
     \edef\@currentlabel{\theequation}%
     \setcounter{equationnumber}{\c@equation}
     \setcounter{equation}{0}%
     \edef\theequation{\@currentlabel\noexpand\alph{equation}}%
  }
    \def\BibTeX{{\rm B\kern-.05em{\sc i\kern-.025em b}\kern-.08em
                 T\kern-.1667em\lower.7ex\hbox{E}\kern-.125emX}}}{}%
\def\AmS{{\protect\usefont{OMS}{cmsy}{m}{n}%
                A\kern-.1667em\lower.5ex\hbox{M}\kern-.125emS}}}{}%
\def\@@eqncr{\let\@tempa\relax
    \ifcase\@eqcnt \def\@tempa{& & &}\or \def\@tempa{& &}%
      \else \def\@tempa{&}\fi
     \@tempa
     \if@eqnsw
        \iftag@
           \@taggnum
        \else
           \@eqnnum\stepcounter{equation}%
        \fi
     \fi
     \global\tag@false
     \global\@eqnswtrue
     \global\@eqcnt\z@\cr}
\def\TCItag{\@ifnextchar*{\@TCItagstar}{\@TCItag}}
\def\@TCItag#1{%
    \global\tag@true
    \global\def\@taggnum{(#1)}%
    \global\def\@currentlabel{#1}}
\def\@TCItagstar*#1{%
    \global\tag@true
    \global\def\@taggnum{#1}%
    \global\def\@currentlabel{#1}}
\def\tint{\msi@int\textstyle\int}%
\def\tiint{\msi@int\textstyle\iint}%
\def\tiiint{\msi@int\textstyle\iiint}%
\def\tiiiint{\msi@int\textstyle\iiiint}%
\def\tidotsint{\msi@int\textstyle\idotsint}%
\def\toint{\msi@int\textstyle\oint}%
\newtoks\temptoksa
\newtoks\temptoksb
\newtoks\temptoksc
\def\msi@int#1#2{%
 \def\@temp{{#1#2\the\temptoksc_{\the\temptoksa}^{\the\temptoksb}}}%
 \futurelet\@nextcs
 \@int
}
\def\@int{%
   \ifx\@nextcs\limits
      \typeout{Found limits}%
      \temptoksc={\limits}%
	  \let\@next\@intgobble%
   \else\ifx\@nextcs\nolimits
      \typeout{Found nolimits}%
      \temptoksc={\nolimits}%
	  \let\@next\@intgobble%
   \else
      \typeout{Did not find limits or no limits}%
      \temptoksc={}%
      \let\@next\msi@limits%
   \fi\fi
   \@next   
}%
\def\@intgobble#1{%
   \typeout{arg is #1}%
   \msi@limits
}
\def\msi@limits{%
   \temptoksa={}%
   \temptoksb={}%
   \@ifnextchar_{\@limitsa}{\@limitsb}%
}
\def\@limitsa_#1{%
   \temptoksa={#1}%
   \@ifnextchar^{\@limitsc}{\@temp}%
}
\def\@limitsb{%
   \@ifnextchar^{\@limitsc}{\@temp}%
}
\def\@limitsc^#1{%
   \temptoksb={#1}%
   \@ifnextchar_{\@limitsd}{\@temp}%
}
\def\@limitsd_#1{%
   \temptoksa={#1}%
   \@temp
}
\def\dint{\msi@int\displaystyle\int}%
\def\diint{\msi@int\displaystyle\iint}%
\def\diiint{\msi@int\displaystyle\iiint}%
\def\diiiint{\msi@int\displaystyle\iiiint}%
\def\didotsint{\msi@int\displaystyle\idotsint}%
\def\doint{\msi@int\displaystyle\oint}%
\def\ExitTCILatex{\makeatother }
\if@compatibility\message{amsmath already loaded}\fi\aftergroup\ExitTCILatex}
\if@compatibility\message{amstex already loaded}\fi\aftergroup\ExitTCILatex}
\if@compatibility\message{amsgen already loaded}\fi\aftergroup\ExitTCILatex}
\let\DOTSI\relax
\def\RIfM@{\relax\ifmmode}%
\def\FN@{\futurelet\next}%
\def\iint{\DOTSI\intno@\tw@\FN@\ints@}%
\def\iiint{\DOTSI\intno@\thr@@\FN@\ints@}%
\def\iiiint{\DOTSI\intno@4 \FN@\ints@}%
\def\idotsint{\DOTSI\intno@\z@\FN@\ints@}%
\def\ints@{\findlimits@\ints@@}%
\newif\iflimtoken@
\newif\iflimits@
\def\findlimits@{\limtoken@true\ifx\next\limits\limits@true
 \else\ifx\next\nolimits\limits@false\else
 \limtoken@false\ifx\ilimits@\nolimits\limits@false\else
 \ifinner\limits@false\else\limits@true\fi\fi\fi\fi}%
\def\multint@{\int\ifnum\intno@=\z@\intdots@                          
 \else\intkern@\fi                                                    
 \ifnum\intno@>\tw@\int\intkern@\fi                                   
 \ifnum\intno@>\thr@@\int\intkern@\fi                                 
 \int}
\def\multintlimits@{\intop\ifnum\intno@=\z@\intdots@\else\intkern@\fi
 \ifnum\intno@>\tw@\intop\intkern@\fi
 \ifnum\intno@>\thr@@\intop\intkern@\fi\intop}%
\def\intic@{%
    \mathchoice{\hskip.5em}{\hskip.4em}{\hskip.4em}{\hskip.4em}}%
\def\negintic@{\mathchoice
 {\hskip-.5em}{\hskip-.4em}{\hskip-.4em}{\hskip-.4em}}%
\def\ints@@{\iflimtoken@                                              
 \def\ints@@@{\iflimits@\negintic@
   \mathop{\intic@\multintlimits@}\limits                             
  \else\multint@\nolimits\fi                                          
  \eat@}
 \else                                                                
 \def\ints@@@{\iflimits@\negintic@
  \mathop{\intic@\multintlimits@}\limits\else
  \multint@\nolimits\fi}\fi\ints@@@}%
\def\intkern@{\mathchoice{\!\!\!}{\!\!}{\!\!}{\!\!}}%
\def\plaincdots@{\mathinner{\cdotp\cdotp\cdotp}}%
\def\intdots@{\mathchoice{\plaincdots@}%
 {{\cdotp}\mkern1.5mu{\cdotp}\mkern1.5mu{\cdotp}}%
 {{\cdotp}\mkern1mu{\cdotp}\mkern1mu{\cdotp}}%
 {{\cdotp}\mkern1mu{\cdotp}\mkern1mu{\cdotp}}}%
\def\RIfM@{\relax\protect\ifmmode}
\def\text{\RIfM@\expandafter\text@\else\expandafter\mbox\fi}
\let\nfss@text\text
\def\text@#1{\mathchoice
   {\textdef@\displaystyle\f@size{#1}}%
   {\textdef@\textstyle\tf@size{\firstchoice@false #1}}%
   {\textdef@\textstyle\sf@size{\firstchoice@false #1}}%
   {\textdef@\textstyle \ssf@size{\firstchoice@false #1}}%
   \glb@settings}
\def\textdef@#1#2#3{\hbox{{%
                    \everymath{#1}%
                    \let\f@size#2\selectfont
                    #3}}}
\newif\iffirstchoice@
\def\Let@{\relax\iffalse{\fi\let\\=\cr\iffalse}\fi}%
\def\vspace@{\def\vspace##1{\crcr\noalign{\vskip##1\relax}}}%
\def\multilimits@{\bgroup\vspace@\Let@
 \baselineskip\fontdimen10 \scriptfont\tw@
 \advance\baselineskip\fontdimen12 \scriptfont\tw@
 \lineskip\thr@@\fontdimen8 \scriptfont\thr@@
 \lineskiplimit\lineskip
 \vbox\bgroup\ialign\bgroup\hfil$\m@th\scriptstyle{##}$\hfil\crcr}%
\def\Sb{_\multilimits@}%
\def\endSb{\crcr\egroup\egroup\egroup}%
\def\Sp{^\multilimits@}%
\newdimen\ex@
\def\rightarrowfill@#1{$#1\m@th\mathord-\mkern-6mu\cleaders
 \hbox{$#1\mkern-2mu\mathord-\mkern-2mu$}\hfill
 \mkern-6mu\mathord\rightarrow$}%
\def\leftarrowfill@#1{$#1\m@th\mathord\leftarrow\mkern-6mu\cleaders
 \hbox{$#1\mkern-2mu\mathord-\mkern-2mu$}\hfill\mkern-6mu\mathord-$}%
\def\leftrightarrowfill@#1{$#1\m@th\mathord\leftarrow
\mkern-6mu\cleaders
 \hbox{$#1\mkern-2mu\mathord-\mkern-2mu$}\hfill
 \mkern-6mu\mathord\rightarrow$}%
\def\overrightarrow{\mathpalette\overrightarrow@}%
\def\overrightarrow@#1#2{\vbox{\ialign{##\crcr\rightarrowfill@#1\crcr
 \noalign{\kern-\ex@\nointerlineskip}$\m@th\hfil#1#2\hfil$\crcr}}}%
\def\overleftarrow{\mathpalette\overleftarrow@}%
\def\overleftarrow@#1#2{\vbox{\ialign{##\crcr\leftarrowfill@#1\crcr
 \noalign{\kern-\ex@\nointerlineskip}$\m@th\hfil#1#2\hfil$\crcr}}}%
\def\overleftrightarrow{\mathpalette\overleftrightarrow@}%
\def\overleftrightarrow@#1#2{\vbox{\ialign{##\crcr
   \leftrightarrowfill@#1\crcr
 \noalign{\kern-\ex@\nointerlineskip}$\m@th\hfil#1#2\hfil$\crcr}}}%
\def\underrightarrow{\mathpalette\underrightarrow@}%
\def\underrightarrow@#1#2{\vtop{\ialign{##\crcr$\m@th\hfil#1#2\hfil
  $\crcr\noalign{\nointerlineskip}\rightarrowfill@#1\crcr}}}%
\def\underleftarrow{\mathpalette\underleftarrow@}%
\def\underleftarrow@#1#2{\vtop{\ialign{##\crcr$\m@th\hfil#1#2\hfil
  $\crcr\noalign{\nointerlineskip}\leftarrowfill@#1\crcr}}}%
\def\underleftrightarrow{\mathpalette\underleftrightarrow@}%
\def\underleftrightarrow@#1#2{\vtop{\ialign{##\crcr$\m@th
  \hfil#1#2\hfil$\crcr
 \noalign{\nointerlineskip}\leftrightarrowfill@#1\crcr}}}%
\def\qopnamewl@#1{\mathop{\operator@font#1}\nlimits@}
\let\nlimits@\displaylimits
\def\setboxz@h{\setbox\z@\hbox}
\def\varlim@#1#2{\mathop{\vtop{\ialign{##\crcr
 \hfil$#1\m@th\operator@font lim$\hfil\crcr
 \noalign{\nointerlineskip}#2#1\crcr
 \noalign{\nointerlineskip\kern-\ex@}\crcr}}}}
 \def\rightarrowfill@#1{\m@th\setboxz@h{$#1-$}\ht\z@\z@
  $#1\copy\z@\mkern-6mu\cleaders
  \hbox{$#1\mkern-2mu\box\z@\mkern-2mu$}\hfill
  \mkern-6mu\mathord\rightarrow$}
\def\leftarrowfill@#1{\m@th\setboxz@h{$#1-$}\ht\z@\z@
  $#1\mathord\leftarrow\mkern-6mu\cleaders
  \hbox{$#1\mkern-2mu\copy\z@\mkern-2mu$}\hfill
  \mkern-6mu\box\z@$}
\def\projlim{\qopnamewl@{proj\,lim}}
\def\injlim{\qopnamewl@{inj\,lim}}
\def\varinjlim{\mathpalette\varlim@\rightarrowfill@}
\def\varprojlim{\mathpalette\varlim@\leftarrowfill@}
\def\varliminf{\mathpalette\varliminf@{}}
\def\varliminf@#1{\mathop{\underline{\vrule\@depth.2\ex@\@width\z@
   \hbox{$#1\m@th\operator@font lim$}}}}
\def\varlimsup{\mathpalette\varlimsup@{}}
\def\varlimsup@#1{\mathop{\overline
  {\hbox{$#1\m@th\operator@font lim$}}}}
\def\align{\@verbatim \frenchspacing\@vobeyspaces \@alignverbatim
You are using the "align" environment in a style in which it is not defined.}
\let\csname endalign*\endcsname =\endtrivlist
\def\alignat{\@verbatim \frenchspacing\@vobeyspaces \@alignatverbatim
You are using the "alignat" environment in a style in which it is not defined.}
\let\csname endalignat*\endcsname =\endtrivlist
\def\xalignat{\@verbatim \frenchspacing\@vobeyspaces \@xalignatverbatim
You are using the "xalignat" environment in a style in which it is not defined.}
\let\csname endxalignat*\endcsname =\endtrivlist
\def\gather{\@verbatim \frenchspacing\@vobeyspaces \@gatherverbatim
You are using the "gather" environment in a style in which it is not defined.}
\let\csname endgather*\endcsname =\endtrivlist
\def\multiline{\@verbatim \frenchspacing\@vobeyspaces \@multilineverbatim
You are using the "multiline" environment in a style in which it is not defined.}
\let\csname endmultiline*\endcsname =\endtrivlist
\def\arrax{\@verbatim \frenchspacing\@vobeyspaces \@arraxverbatim
You are using a type of "array" construct that is only allowed in AmS-LaTeX.}
\def\tabulax{\@verbatim \frenchspacing\@vobeyspaces \@tabulaxverbatim
You are using a type of "tabular" construct that is only allowed in AmS-LaTeX.}
\let\csname endarrax*\endcsname =\endtrivlist
\let\csname endtabulax*\endcsname =\endtrivlist
 \def\endequation{%
     \ifmmode\ifinner 
      \iftag@
        \addtocounter{equation}{-1} 
        $\hfil
           \displaywidth\linewidth\@taggnum\egroup \endtrivlist
        \global\tag@false
        \global\@ignoretrue   
      \else
        $\hfil
           \displaywidth\linewidth\@eqnnum\egroup \endtrivlist
        \global\tag@false
        \global\@ignoretrue 
      \fi
     \else   
      \iftag@
        \addtocounter{equation}{-1} 
        \eqno \hbox{\@taggnum}
        \global\tag@false%
        $$\global\@ignoretrue
      \else
        \eqno \hbox{\@eqnnum}
        $$\global\@ignoretrue
      \fi
     \fi\fi
 } 
 \newif\iftag@ \tag@false
 \def\TCItag{\@ifnextchar*{\@TCItagstar}{\@TCItag}}
 \def\@TCItag#1{%
     \global\tag@true
     \global\def\@taggnum{(#1)}%
     \global\def\@currentlabel{#1}}
 \def\@TCItagstar*#1{%
     \global\tag@true
     \global\def\@taggnum{#1}%
     \global\def\@currentlabel{#1}}
     \def\tag{\@ifnextchar*{\@tagstar}{\@tag}}
     \def\@tag#1{%
         \global\tag@true
         \global\def\@taggnum{(#1)}}
     \def\@tagstar*#1{%
         \global\tag@true
         \global\def\@taggnum{#1}}
\title{
	\small{This work has been submitted to the IEEE for possible publication. Copyright may be transferred without notice, after which this version may no longer be accessible.
\vspace{10mm}}\\
\LARGE \bf Distributed Robust Nash Equilibrium Seeking for Mixed-Order Games by a Neural-Network based Approach}
\author{Maojiao Ye, \emph{Member, IEEE}, Lei Ding, \emph{Senior Member, IEEE}, and Jizhao Yin
\thanks{M. Ye and J. Yin are with the School of Automation, Nanjing University of Science and Technology, 210094, P. R. China (Email: ye0003ao@e.ntu.edu.sg, yinjizhao@njust.edu.cn); L. Ding is with the Institute of Advanced Techology, Nanjing University of Posts and Telecommunications, 210023 (Email: dl522@163.com).}
\thanks{
	This work is supported by the National Natural Science Foundation of China (NSFC), No. 62222308, 62173181, 62073171,  the Natural Science Foundation of
	Jiangsu Province, No. BK20200744, BK20220139, Jiangsu Specially-Appointed Professor, No. RK043STP19001, the Young Elite Scientists Sponsorship Program by
	CAST, No. 2021QNRC001, 1311 Talent Plan of Nanjing University of Posts
	and Telecommunications, and the Fundamental Research Funds for the Central Universities, No. 30920032203. (\emph{Corresponding
	author: Lei Ding}).}
}
\begin{document}

\maketitle
\thispagestyle{empty}
\pagestyle{empty}

\begin{abstract}
	In practical applications, decision-makers with heterogeneous dynamics may be engaged in the same decision-making process. This motivates us to study distributed Nash equilibrium seeking for games in which players are mixed-order (first- and second-order) integrators influenced by unknown dynamics and external disturbances in this paper.
 To solve this problem, we employ an adaptive neural network to manage unknown dynamics and disturbances, based on which a distributed Nash equilibrium seeking algorithm is developed by further adapting concepts from gradient-based optimization and multi-agent consensus. By constructing appropriate Lyapunov functions, we analytically prove convergence of the reported method. Theoretical investigations suggest that players' actions would be steered to an arbitrarily small neighborhood of the Nash equilibrium, which is also testified by simulations.
\end{abstract}

\begin{keywords}
Mixed-order integrators; Nash equilibrium; neural network; distributed network.
\end{keywords}

\section{Introduction}
Game theory acts as an  effective technique for investigating interactive decision-making situations involving multiple participants. Typical examples that fall into the game theoretic framework include economic dispatch problems \cite{2}, charging coordination among electric vehicles \cite{wanTSG21}, energy consumption coordination in the smart grid \cite{8}, 
global optimization \cite{SmirnovSMCA2021},
 and formation control of multi-agent systems \cite{4}. The wide applications of games motivate many researchers to direct their energies to the development of Nash equilibrium seeking algorithms, leading to fruitful results in this field. For instance, games in which players are first-order integrators, second-order integrators, high-order integrators and linear-invariant dynamic ones were respectively investigated in \cite{5}-\cite{11}. 
Games in which players are described by Euler-Lagrange systems were investigated in \cite{12} and hybrid games, in which both  discrete-time players and continuous-time players are engaged, were addressed by the authors in \cite{13}.
It is worth mentioning that most of existing works focused on games in which players have homogeneous dynamics and related results on games with heterogeneous players are quite limited.
However, with distinct computation abilities, working environment and dynamics, decision-makers exhibit significant and remarkable heterogeneity in various perspectives.    Steered by the incentive to accommodate heterogeneity among different entities,
 heterogeneous multi-agent systems have attracted quite a few attention.
 For example, the authors in \cite{16}-\cite{17} and \cite{23} were concerned with formation control, output regulation as well as optimal coordination in linear multi-agent systems respectively, in which the constant matrices associated with the agents' dynamics are different from each other. Nonlinear systems with heterogeneous dimensions were considered in \cite{18}.  Moreover, second-order systems with time-varying gains and distinct inertia were explored in \cite{20}.
Among various kinds of heterogeneities, systems with both first- and second-order agents are of great interest since velocity-driven vehicles may work and collaborate with acceleration-driven ones \cite{21}.

The authors in \cite{22} dealt with consensus for a category of multi-agent systems where the engaged agents are described by first-order integrators as well as second-order integrators without using knowledge on the agents' velocity information. In addition, average consensus problems were addressed in \cite{21} under similar settings.
Consensus protocols were considered to be subject to bounded delays  for mixed-order systems in a discrete-time scenario in  \cite{14}.  However, few results on games in which players' dynamics are of different order have been reported, especially when both nonlinear dynamics and disturbances are involved. Therefore, this paper focuses on the establishment of distributed Nash equilibrium seeking algorithms for games with mixed-order participants. Moreover, as in many practical situations, e.g., physical hydraulic systems \cite{32}, air hybrid vehicles \cite{33} and marine surface vessels \cite{34}, external disturbances and un-modeled dynamics are inevitable due to complex working environment of engineering actuators and limited knowledge about the explicit system model, this paper further considers that players' dynamics are influenced by  unknown nonlinear dynamics and time-varying disturbances. Noticing that radial basis function neural network (RBFNN) has been proven to be effective for approximating unknown continuous mappings over a compact domain (see, e.g., \cite{31}-\cite{37}), this paper takes the benefits of RBFNN to establish robust Nash equilibrium seeking strategies for the considered mixed-order games. With some preliminary findings presented in \cite{YinICCA2020},
we give the core contributions and novelties of this manuscript as follows.
\begin{enumerate}
  \item This paper accommodates games with mixed-order integrator-type players who are suffering from both unknown nonlinear dynamics and time-varying disturbances. In comparison with the state of art,  the setting has rarely been explored. The presented exploration provides a unified viewpoint on how to simultaneously deal with first- and second-order players and offers convenience for the applications of games in mixed-order multi-agent systems.
  \item Un-modeled but Lipschitz nonlinear dynamics and disturbances are addressed through adapting an adaptive neural network. By compensating players' dynamics with the approximated value generated by the neural network, a distributed Nash equilibrium seeking strategy is developed for mixed-order games. This paper significantly improves its conference version \cite{YinICCA2020} by considering the disturbances and nonlinear dynamics that are unknown beforehand.
  \item The convergence property of the reported algorithm is analytically investigated  on the basis of Lyapunov stability analysis. The mathematical investigations show that the reported method is capable of steering players' actions and velocities respectively to be arbitrarily close to  the Nash equilibrium and zero.
  \end{enumerate}

We organize the remaining sections as below. Related preliminary knowledge is offered in Section \ref{npre} and Section \ref{p1_res} shows the problem in consideration. Method development and the corresponding analysis are provided in Section \ref{main_ref}. Numerical illustrations are offered in Section \ref{simulat}. Furthermore, concluding statements are illustrated in Section \ref{conc}.

\section{Preliminaries}\label{npre}

\textbf{Algebraic  graph theory}:
Graph $\mathcal{G}=(\mathbb{N},\mathcal{E})$ is given by a vertex set $\mathbb{N}=\{1,2,\cdots,N\}$, together with the associated edge set $\mathcal{E}\subseteq \mathbb{N} \times \mathbb{N}$. This paper considers that $\mathcal{G}$ is undirected in the sense that for any $(i,j)\in\mathcal{E}$, we can derive that $(j,i)\in\mathcal{E}$. Furthermore, the graph is connected provided that for every pair of distinct vertices, there exists a path. The adjacency matrix and Laplacian matrix of $\mathcal{G}$ are defined as $\mathcal{A}=[a_{ij}]$ and  $\mathcal{L}=\mathcal{D}-\mathcal{A},$ respectively, in which $a_{ij}=1$ if $(j,i)\in \mathcal{E}$, else, $a_{ij}=0$ ($a_{ii}=0$), $\mathcal{D}$ is a diagonal matrix with its $i$th diagonal entry being $\sum_{j=1}^Na_{ij}$  and the notation $B=[b_{ij}]$ illustrates a matrix whose $(i,j)$th entry is $b_{ij}$  \cite{5}.

\textbf{Radial basis function neural networks:}
A continuous function $l(z):\mathbb{R}^N\rightarrow \mathbb{R}^N$ can be estimated on a compact domain $z\in \Omega_z\subset \mathbb{R}^N$ by
\begin{equation}
l_{NN}(z)=W^TS(z),
\end{equation}
in which $W\in \mathbb{R}^{q\times N}$ is an adjustable weight matrix and $q$ is the neuron number. Moreover, $S(z)=[s_1(z),s_2(z),\cdots,s_q(z)]^T$ is the activation function given by
\begin{equation}
s_i(z)=\text{exp}\left[\frac{-(z-\mu_i)^T(z-\mu_i)}{\rho_i^2}\right],  i=1,2,\cdots,q,
\end{equation}
in which $\mu_i=[\mu_{i1},\mu_{i2},\cdots,\mu_{iN}]^T$ denotes the center of the receptive field, and $\rho_i$ denotes the width of the Gaussian function \cite{31}.

Then, for $z\in\Omega_z$ and any arbitrary small positive constant $\bar{\varepsilon}$, there exist a weight matrix $W^*\in\mathbb{R}^{q\times N}$ and a neural number $q$ so that
\begin{equation}
l(z)=W^{*T}S(z)+\varepsilon,
\end{equation}
in which $\varepsilon$ is the estimation error that satisfies $|\varepsilon|\leq \bar{\varepsilon}$ \cite{31}.


\begin{Lemma}\cite{31}\label{lemma2}
Assume that $V(t)\geq 0$ is a continuous function defined for $t\geq0$ and $V(0)$ is bounded. Then, if
\begin{equation}
\dot{V}(t)\leq-aV(t)+b,
\end{equation}
where $a>0,b>0$ are constants, we can obtain that
\begin{equation}
V(t)\leq V(0)e^{-at}+\frac{b}{a}(1-e^{-at}).
\end{equation}
\end{Lemma}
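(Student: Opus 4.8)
The plan is to use the integrating-factor technique, converting the differential inequality into an integrable exact form and then reading off the bound. First I would introduce the auxiliary quantity $e^{at}V(t)$ and differentiate it, noting that $\frac{d}{dt}\bigl(e^{at}V(t)\bigr) = e^{at}\bigl(\dot{V}(t) + aV(t)\bigr)$. Substituting the hypothesis $\dot{V}(t) \leq -aV(t) + b$ into the bracketed term gives $\dot{V}(t) + aV(t) \leq b$, and since $e^{at} > 0$ this yields the key inequality $\frac{d}{dt}\bigl(e^{at}V(t)\bigr) \leq b\,e^{at}$.

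Next I would integrate this inequality over the interval $[0,t]$. The left-hand side telescopes to $e^{at}V(t) - V(0)$, while the right-hand side evaluates to $\frac{b}{a}\bigl(e^{at}-1\bigr)$. Rearranging produces $e^{at}V(t) \leq V(0) + \frac{b}{a}\bigl(e^{at}-1\bigr)$, and multiplying through by the positive factor $e^{-at}$ delivers exactly the claimed bound $V(t) \leq V(0)e^{-at} + \frac{b}{a}\bigl(1 - e^{-at}\bigr)$. The nonnegativity hypothesis $V(t)\geq 0$ and boundedness of $V(0)$ guarantee that all quantities involved are finite, so no degeneracy arises.

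The step requiring the most care is the integration, because $V$ is assumed only continuous rather than continuously differentiable, so strictly speaking $\dot{V}$ should be interpreted as an upper right-hand Dini derivative. To make the argument fully rigorous I would invoke the comparison principle: let $u(t)$ solve the scalar ODE $\dot{u} = -au + b$ with $u(0) = V(0)$, whose explicit solution is precisely $u(t) = V(0)e^{-at} + \frac{b}{a}\bigl(1-e^{-at}\bigr)$. Since the right-hand side $-au+b$ is (globally) Lipschitz in its state argument and $\dot{V} \leq -aV + b$, the comparison lemma gives $V(t) \leq u(t)$ for all $t \geq 0$, which is the desired inequality. In the smooth case the elementary integrating-factor computation above suffices, and that is the route I would present in the main proof, deferring the Dini-derivative justification to a brief remark so as not to obscure the essentially one-line calculation.
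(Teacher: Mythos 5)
Your proof is correct. Note that the paper itself gives no proof of this lemma---it is simply imported from reference [31]---so there is nothing to compare against; the integrating-factor computation you present is the standard derivation, and your closing remark about interpreting $\dot{V}$ as a Dini derivative and falling back on the comparison principle when $V$ is merely continuous is exactly the right way to make the statement rigorous as written.
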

\begin{Lemma}\cite{30}\label{lemma3}
For any $\epsilon>0$ and $\eta\in \mathbb{R}$,
\begin{equation}
0\leq\lvert \eta \lvert-\eta \text{tanh}(\frac{\eta}{\epsilon})\leq \mathcal{K}\epsilon,
\end{equation}
where  $\mathcal{K}=\text{e}^{-(\mathcal{K}+1)}$.
\end{Lemma}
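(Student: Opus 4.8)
The plan is to establish the two inequalities separately, after first exploiting a symmetry. Writing $f(\eta) = |\eta| - \eta\tanh(\eta/\epsilon)$, I would observe that $f$ is even: replacing $\eta$ by $-\eta$ leaves $|\eta|$ unchanged and sends $\eta\tanh(\eta/\epsilon)$ to $(-\eta)\tanh(-\eta/\epsilon) = \eta\tanh(\eta/\epsilon)$, since $\tanh$ is odd. Hence it suffices to treat $\eta \ge 0$, where $f(\eta) = \eta\bigl(1 - \tanh(\eta/\epsilon)\bigr)$. The lower bound is then immediate: $\tanh$ is strictly below $1$ for every finite argument, so $1 - \tanh(\eta/\epsilon) > 0$, and together with $\eta \ge 0$ this gives $f(\eta) \ge 0$, with equality only at $\eta = 0$.

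For the upper bound, I would rescale by setting $x = \eta/\epsilon \ge 0$, so that $f(\eta) = \epsilon\, h(x)$ with $h(x) = x\bigl(1 - \tanh x\bigr)$. The claim reduces to $\sup_{x\ge 0} h(x) = \mathcal{K}$. Since $h(0) = 0$ and $h(x) \to 0$ as $x \to \infty$ (the factor $1 - \tanh x$ decays exponentially while $x$ grows only linearly), the supremum is attained at an interior critical point. Using $\tfrac{d}{dx}\tanh x = 1 - \tanh^2 x = (1-\tanh x)(1+\tanh x)$, I compute
\[
h'(x) = (1 - \tanh x) - x(1 - \tanh x)(1 + \tanh x) = (1 - \tanh x)\bigl[\,1 - x(1 + \tanh x)\,\bigr].
\]
As $1 - \tanh x > 0$, any critical point $x^\star$ must satisfy $x^\star(1 + \tanh x^\star) = 1$.

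The crux is to evaluate $m := h(x^\star)$ and identify it with $\mathcal{K}$. I would combine the critical-point relation $x^\star(1 + \tanh x^\star) = 1$ with $m = x^\star(1 - \tanh x^\star)$: adding the two gives $2x^\star = m + 1$. Rewriting $1 - \tanh x = 2e^{-2x}/(1 + e^{-2x})$ and substituting $2x^\star = m+1$ (so that $e^{-2x^\star} = e^{-(m+1)}$) into $m = x^\star(1 - \tanh x^\star)$ yields, after clearing the denominator, the self-referential relation $m = e^{-(m+1)}$ — exactly the defining equation for $\mathcal{K}$. Because $g(s) = s - e^{-(s+1)}$ has $g'(s) = 1 + e^{-(s+1)} > 0$ and is therefore strictly increasing, this equation has a unique root, so $m = \mathcal{K}$ and consequently $f(\eta) \le \mathcal{K}\epsilon$. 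The main obstacle is precisely this algebraic elimination: one must feed the $\tanh$ identity together with the critical-point condition into $h(x^\star)$ to collapse the transcendental maximization into $m = e^{-(m+1)}$; once that identity is in hand, the remaining steps are routine bookkeeping.
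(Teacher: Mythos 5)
Your proof is correct and complete. The paper itself does not prove this lemma at all --- it imports it verbatim from reference \cite{30} (Polycarpou, 1996) --- so there is no in-paper argument to compare against; your write-up supplies the missing elementary derivation. The key steps all check out: the evenness reduction, the factorization $h'(x)=(1-\tanh x)\bigl[1-x(1+\tanh x)\bigr]$, and in particular the algebraic collapse at the critical point. Indeed, from $x^\star(1+\tanh x^\star)=1$ and $m=x^\star(1-\tanh x^\star)$ one gets $2x^\star=m+1$, and substituting $1-\tanh x^\star = 2e^{-2x^\star}/(1+e^{-2x^\star})$ gives $m\bigl(1+e^{-(m+1)}\bigr)=(m+1)e^{-(m+1)}$, which cancels to $m=e^{-(m+1)}$ exactly as you claim; monotonicity of $s\mapsto s-e^{-(s+1)}$ then pins $m=\mathcal{K}$. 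One cosmetic remark: the supremum is genuinely attained (the unique critical point is interior since $x\mapsto x(1+\tanh x)$ is strictly increasing from $0$ to $\infty$ on $[0,\infty)$), so the bound $f(\eta)\le\mathcal{K}\epsilon$ is tight, which is consistent with the lemma's non-strict inequality.
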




\section{Problem Description}\label{p1_res}

Consider a game containing $N$ players, in which the player set is given by $\mathbb{N}=\{1,2,\cdots,N\}$. Suppose that $n$ ($n\geq 1$ and $n< N$) of them are first-order integrators whose actions are steered by
\begin{equation}\label{first}
\dot{x}_i=u_i+g_i(\mathbf{x})+d_i(t),i\in \mathbb{N}_f,
\end{equation}
in which $x_i\in\mathbb{R}$, $u_i\in\mathbb{R}$, $g_i(\mathbf{x})\in\mathbb{R},$ $d_i(t)\in\mathbb{R}$, respectively represent for the action, the control signal to be designed, the unknown dynamics and the external, time-varying disturbance of player $i$. Moreover, $\mathbf{x}$ is a vector containing all players' actions, i.e., $\mathbf{x}=[x_1,x_2,\cdots,x_N]^T$ and $\mathbb{N}_f$ is the set of first-order players, i.e., $\mathbb{N}_f=\{1,2,\cdots,n\}.$ Furthermore, assume that the rest of players are second-order integrators whose actions evolve according to
\begin{equation}\label{second}
\begin{aligned}
\dot{x}_i&=v_i,\\
\dot{v}_i&=u_i+g_i(\mathbf{x})+d_i(t),\ \ i\in \mathbb{N}_s,
\end{aligned}
\end{equation}
in which $x_i\in\mathbb{R},$ $v_i\in\mathbb{R},$ $u_i\in\mathbb{R}$, $g_i(\mathbf{x})\in\mathbb{R}$ and $d_i(t)\in\mathbb{R},$ respectively denote the action, velocity, control signal, unknown dynamics and disturbance of player $i$. In addition, $\mathbb{N}_s$ is the set of second-order integrators, i.e., $\mathbb{N}_s=\{n+1,n+2,\cdots,N\}.$
Based on the above notations, it is clear that $\mathbb{N}=\mathbb{N}_f\bigcup \mathbb{N}_s.$ Associate each player $i,i\in\mathbb{N}$ with a cost function $f_i(\mathbf{x}),$ which can be alternatively denoted as $f_i(x_i,\mathbf{x}_{-i})$ by defining $\mathbf{x}_{-i}=[x_1,x_2,\cdots,x_{i-1},x_{i+1},\cdots,x_N]^T.$ The purpose of this manuscript is to construct  control signals $u_i,i\in\mathbb{N}$ so that  players' actions $\mathbf{x}$ can be steered to the Nash equilibrium $\mathbf{x}^*=(x_i^*,\mathbf{x}_{-i}^*)$, that satisfies
\begin{equation}
f_i(x_i^*,\mathbf{x}_{-i}^*)\leq f_i(x_i,\mathbf{x}_{-i}^*),
\end{equation}
for $x_i\in \mathbb{R},i\in\mathbb{N}$.

For notational simplicity, let $\nabla_if_i(\mathbf{x})=\frac{\partial f_i(\mathbf{x})}{\partial x_i}$ and $\nabla_{ij}^2f_i(\mathbf{x})=\frac{\partial^2 f_i(\mathbf{x})}{\partial x_i\partial x_j}.$
The mathematical development of this paper is based on the subsequent conditions.

\begin{Assumption}\label{Assu_1}
For each $i\in\mathbb{N},$ $f_i(\mathbf{x})$ is $\mathcal{C}^2$ and $\nabla_if_i(\mathbf{x})$ is globally Lipschitz  with constant $\bar{l}_i$ for $\mathbf{x}\in \mathbb{R}^N$.
\end{Assumption}

\begin{Assumption}\label{ass_2}
The players can exchange information through an undirected and connected graph $\mathcal{G}$.
\end{Assumption}

For notational convenience, let $\mathcal{A}_0=\text{diag}\{a_{ij}\}$ for $i,j\in\mathbb{N}$ denote a diagonal matrix with its diagonal elements successively being $a_{11},a_{12},\cdots,a_{1N},a_{21},\cdots,a_{NN}$. Moreover,  let $\otimes$ denote the Kronecker product. Then, under Assumption \ref{ass_2}, $-(\mathcal{L}\otimes \mathbf{I}_{N\times N}+\mathcal{A}_0)$, in which $\mathbf{I}_{N\times N}$ is an identity matrix of dimension $N\times N$, is Hurwitz.  Hence,
$\mathbf{P}(\mathcal{L}\otimes \mathbf{I}_{N\times N}+\mathcal{A}_0)+(\mathcal{L}\otimes \mathbf{I}_{N\times N}+\mathcal{A}_0)\mathbf{P}=\mathbf{Q}$ for some symmetric positive definite matrices $\mathbf{P}$ and $\mathbf{Q}$ of compatible dimensions \cite{5}.

\begin{Assumption}\label{Assu_2}
For $\mathbf{x},\mathbf{z}\in \mathbb{R}^N,$
\begin{equation}\label{cond_1}
(\mathbf{x}-\mathbf{z})^T(\mathcal{P}(\mathbf{x})-\mathcal{P}
(\mathbf{z}))\geq m||\mathbf{x}-\mathbf{z}||^2,
\end{equation}
where $m>0$  is a constant and $\mathcal{P}(\mathbf{x})=[\nabla_1f_1(\mathbf{x}),\nabla_2f_2(\mathbf{x}),\cdots,\nabla_Nf_N(\mathbf{x})]^T.$
\end{Assumption}

\begin{Assumption}\label{ass4}
For $\mathbf{x}\in \mathbb{R}^N,$ $\nabla_{ij}^2f_{i}(\mathbf{x})$ is bounded for $i\in\mathbb{N}_s,j\in \mathbb{N}$.
\end{Assumption}

\begin{Assumption}\label{ass5}
For each $i\in \mathbb{N},$ $g_i(\mathbf{x})$ is globally Lipschitz with constant $\eta_i$ and $d_i(t)$ is bounded.
\end{Assumption}

\begin{Remark}
Note that in \cite{26}, it is required that the un-modeled dynamics $g_{i}(\mathbf{x})$ is sufficiently smooth with its first two partial derivatives being bounded provided that $\mathbf{x}$ is bounded. Similarly, the disturbance $d_i(t)$ is supposed to be sufficiently smooth with $\dot{d}_i(t)$ and $\ddot{d}_i(t)$ being bounded in \cite{WangAT2020}\cite{26}. From Assumption \ref{ass5}, we see that these conditions are relaxed to some extent in this paper. Moreover, compared with internal model based approaches in \cite{ZhangTcyber20}\cite{RomanoTCNS20}, we do not assume disturbances to be of specific forms and different from \cite{HuangTSMCA} that considered quadratic games, this paper considers games with general costs. Besides, this paper considers mixed-order system dynamics while in the aforementioned works, players' dynamics are of the same order. The heterogeneity would further introduce some difficulties in the establishment and analytical study of the seeking algorithms.
\end{Remark}

\section{Main Results}\label{main_ref}

In this section, a distributed Nash equilibrium seeking strategy will be developed on the basis of  adaptive neural networks, consensus algorithms and gradient-based optimization algorithms. Moreover, the corresponding convergence analysis will be provided.

\subsection{Method Establishment}

To realize disturbance rejection in the considered game, the core idea of this paper is to adapt RBFNN (see, e.g., \cite{31} and many other references) to accommodate the unknown disturbances and dynamics. With RBFNN, the control input of player $i$ for $i\in\mathbb{N}_f$ is designed as
\begin{equation}\label{contro1}
u_i=-k_1(x_i-z_i)-\hat{W}_i^TS_i(\mathbf{y}_i)-\phi_i,
\end{equation}
in which $k_1$ is a positive constant, $z_i\in \mathbb{R}$ and $\hat{W}_i\in\mathbb{R}^{q_i\times 1}$ ($q_i$ is the number of neurons for player $i$) are adaptively updated according to
\begin{equation}\label{z_i_pr}
\dot{z}_i=-k_2\nabla_i f_i(\mathbf{y}_i),
\end{equation}
in which $k_2$ is a positive constant, $\nabla_i f_i(\mathbf{y}_i)=\nabla_i f_i(\mathbf{x})\left.\right|_{\mathbf{x}=\mathbf{y}_i}$
and
\begin{equation}\label{ad1}
\dot{\hat{W}}_i=\beta S_i(\mathbf{y}_i)(x_i-z_i),
\end{equation}
if ${\hat{W}_i}^T\hat{W}_i<W_{\text{max}}$
or alternatively, ${\hat{W}_i}^T\hat{W}_i=W_{\text{max}}$ and  $(x_i-z_i){\hat{W}_i}^TS_i(\mathbf{y}_i)<0$, where $\beta$ and $W_{\text{max}}$ are positive constants. In addition,
\begin{equation}\label{ad2}
\dot{\hat{W}}_i=\beta S_i(\mathbf{y}_i)(x_i-z_i)-\beta\frac{(x_i-z_i){\hat{W}_i}^TS_i(\mathbf{y}_i)}{{\hat{W}_i}^T\hat{W}_i}\hat{W}_i,
\end{equation}
if  ${\hat{W}_i}^T\hat{W}_i=W_{\text{max}}$ and $(x_i-z_i){\hat{W}_i}^TS_i(\mathbf{y}_i)\geq0$. Note that it is required that ${\hat{W}_i}^T(0)\hat{W}_i(0)\leq W_{\text{max}}$, which can be achieved by chosen the initial value of $\hat{W}_i$ to be zero.
Moreover, in \eqref{contro1},
\begin{equation}
\phi_i=\delta \text{tanh}\left(\frac{\mathcal{K}\delta(x_{i}-z_{i})}{\epsilon}\right),
\end{equation}
in which $\epsilon>0,\delta>0$ are constants. Furthermore, $\mathbf{y}_i\in\mathbb{R}^N$ and is defined as $\mathbf{y}_i=[y_{i1},y_{i2},\cdots,y_{iN}]^T$ where $y_{ij}$ is produced by
\begin{equation}\label{eqd}
\dot{y}_{ij}=-k_3 \left(\sum_{k=1}^Na_{ik}(y_{ij}-y_{kj})+a_{ij}(y_{ij}-\bar{x}_j)\right),
j\in\mathbb{N},
\end{equation}
where $k_3>0$ is a constant,  $\bar{x}_j=z_j$ for $j\in\mathbb{N}_f$ and $\bar{x}_j=x_j$ for $j\in\mathbb{N}_s$.
\begin{Remark}
The control input designed for first-order integrator-type players in \eqref{contro1} contains a regulation term $x_i-z_i$, which is employed to regulate $x_i$ to $z_i$. As the purpose of this paper is to drive $\mathbf{x}$ to $\mathbf{x}^*$, such a regulation term actually transfers the problem to drive $\mathbf{z}$, defined as $\mathbf{z}=[z_1,z_2,\cdots,z_N]^T$, to $\mathbf{x}^*,$ which is achieved by
\eqref{z_i_pr} and \eqref{eqd}.  In addition, $\hat{W}_i^TS_i(\mathbf{y}_i)$ and $\phi_i$ are designed based on RBFNN to address unknown dynamics and time-varying disturbances.
\end{Remark}

 By similar ideas, for second-order players, the control input of player $i$ for $i\in\mathbb{N}_s$ is designed as
 \begin{equation}\label{control2}
 u_i=-k_2k_4\nabla_i f_i(\mathbf{y}_i)-k_4v_i-\hat{W}_i^TS_i(\mathbf{y}_i)-\phi_i,
 \end{equation}
 where $k_4>0$ is a constant and
 $\hat{W}_i$ is updated according to
 \begin{equation}\label{ad3}
 \dot{\hat{W}}_i=\beta S_i(\mathbf{y}_i)(k_2\nabla_i f_i(\mathbf{y}_i)+v_i),
 \end{equation}
if ${\hat{W}_i}^T\hat{W}_i<W_{\text{max}}$ or alternatively ${\hat{W}_i}^T\hat{W}_i=W_{\text{max}}$ and $(k_2\nabla_i f_i(\mathbf{y}_i)+v_i){\hat{W}_i}^TS_i(\mathbf{y}_i)<0$.
Moreover, if  ${\hat{W}_i}^T\hat{W}_i=W_{\text{max}}$ and $(k_2\nabla_i f_i(\mathbf{y}_i)+v_i){\hat{W}_i}^TS_i(\mathbf{y}_i)\geq 0,$
\begin{equation}\label{ad4}
\begin{aligned}
\dot{\hat{W}}_i=&\beta S_i(\mathbf{y}_i)(k_2\nabla_i f_i(\mathbf{y}_i)+v_i)\\
&-\beta\frac{(k_2\nabla_i f_i(\mathbf{y}_i)+v_i){\hat{W}_i}^TS_i(\mathbf{y}_i)}{{\hat{W}_i}^T\hat{W}_i}\hat{W}_i,
\end{aligned}
\end{equation}
where ${\hat{W}_i}^T(0)\hat{W}_i(0)\leq W_{\text{max}}$.

Furthermore,
\begin{equation}
\phi_i=\delta \text{tanh}\left(\frac{\mathcal{K}\delta (k_2\nabla_i f_i(\mathbf{y}_i)+v_i)}{\epsilon}\right),
\end{equation}
and
\begin{equation}\label{eqre}
\dot{y}_{ij}=-k_3 \left(\sum_{k=1}^Na_{ik}(y_{ij}-y_{kj})+a_{ij}(y_{ij}-\bar{x}_j)\right),
j\in\mathbb{N}.
\end{equation}

\begin{Remark}
The control input design for second-order players in \eqref{control2} is similar to the  control design in \eqref{contro1}, where   $\hat{W}_i^TS_i(\mathbf{y}_i)$ and $\phi_i$ are included to accommodate unknown dynamics and disturbances. Different from \eqref{contro1}, stabilization of players' velocities $v_i$ is needed and achieved by the negative feedback of velocity $v_i$ in \eqref{control2}. In addition, it should be noted that multi-agent consensus components in  \eqref{eqd} and \eqref{eqre} are of the same format but $\bar{x}_i=x_i$ for second-order players, while $\bar{x}_j=z_j$ for first-order integrators.
\end{Remark}

Recalling the dynamics of first- and second-order integrator-type players in \eqref{first} and \eqref{second}, we get that
for first-order players,
\begin{equation}\label{eqww1}
\begin{aligned}
\dot{\mathbf{x}}_f=&-k_1(\mathbf{x}_f-\mathbf{z}_f)-[\hat{W}_i^TS_i(\mathbf{y}_i)]_{\mathbb{N}_f}\\
&-[\phi_i]_{\mathbb{N}_f}+[g_i(\mathbf{x})]_{\mathbb{N}_f}+[d_i(t)]_{\mathbb{N}_f},\\
\dot{\mathbf{z}}_f=&-k_2[\nabla_i f_i(\mathbf{y}_i)]_{\mathbb{N}_f},
\end{aligned}
\end{equation}
and for second-order players,
\begin{equation}\label{eqww2}
\begin{aligned}
\dot{\mathbf{x}}_s=&\mathbf{v}_s,\\
\dot{\mathbf{v}}_s=&-k_4\mathbf{v}_s-k_2k_4[\nabla_i f_i(\mathbf{y}_i)]_{\mathbb{N}_s}-[\hat{W}_i^TS_i(\mathbf{y}_i)]_{\mathbb{N}_s}\\
&-[\phi_i]_{\mathbb{N}_s}+[g_i(\mathbf{x})]_{\mathbb{N}_s}+[d_i(t)]_{\mathbb{N}_s},
\end{aligned}
\end{equation}
and  for $\mathbf{y}=[\mathbf{y}_1^T,\mathbf{y}_2^T,\cdots, \mathbf{y}_N^T]^T,$
\begin{equation}\label{eqww3}
\dot{\mathbf{y}}=-k_3(\mathcal{L}\otimes \mathbf{I}_{N\times N}+\mathcal{A}_0)(\mathbf{y}-\mathbf{1}_N\otimes \bar{\mathbf{x}}),
\end{equation}
where $\bar{\mathbf{x}}=[\bar{x}_1,\bar{x}_2,\cdots,\bar{x}_N]^T,$ $\mathbf{x}_f=[x_i]_{\mathbb{N}_f},$ $\mathbf{z}_f=[z_i]_{\mathbb{N}_f}$,
$\mathbf{x}_s=[x_i]_{\mathbb{N}_s},$ $\mathbf{v}_s=[v_i]_{\mathbb{N}_s}$ and the notation $[p_i]_{\mathbb{N}_f}([p_i]_{\mathbb{N}_s})$ defines the concatenated vector of $p_i$ for $i\in \mathbb{N}_f (i\in \mathbb{N}_s)$.

By similar analysis in \cite{31}, the subsequent result, which is needed in the convergence analysis of the proposed method, can be obtained.
\begin{Lemma}\label{lemma_add}
By the adaptive laws in \eqref{ad1}-\eqref{ad2} and \eqref{ad3}-\eqref{ad4},
\begin{equation}
\hat{W}_i^T(t)\hat{W}_i(t)\leq W_{\text{max}},\forall i\in\mathbb{N},
\end{equation}
for all $t\geq 0.$
\end{Lemma}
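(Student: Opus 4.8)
The plan is to track the scalar quantity $\Phi_i(t) := \hat{W}_i^T(t)\hat{W}_i(t)$ and show that the set $\{\Phi_i\le W_{\text{max}}\}$ is positively invariant under each switching pair of adaptive laws. Since $\hat{W}_i^T(0)\hat{W}_i(0)\le W_{\text{max}}$ by the stated initialization, it suffices to verify that $\dot{\Phi}_i\le 0$ whenever $\Phi_i = W_{\text{max}}$. Differentiating gives $\dot{\Phi}_i = 2\hat{W}_i^T\dot{\hat{W}}_i$, so the whole argument reduces to examining the inner product of $\hat{W}_i$ with the right-hand side of each adaptive law on the boundary surface.

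First I would treat the first-order players governed by \eqref{ad1}--\eqref{ad2}. Whenever $\hat{W}_i^T\hat{W}_i = W_{\text{max}}$ and $(x_i-z_i)\hat{W}_i^TS_i(\mathbf{y}_i)<0$, law \eqref{ad1} is active, so
\[
\dot{\Phi}_i = 2\beta (x_i-z_i)\hat{W}_i^TS_i(\mathbf{y}_i)<0 .
\]
When instead $\hat{W}_i^T\hat{W}_i=W_{\text{max}}$ and $(x_i-z_i)\hat{W}_i^TS_i(\mathbf{y}_i)\ge 0$, the projected law \eqref{ad2} is active, and substituting it the radial correction term exactly cancels the growth term:
\[
\dot{\Phi}_i = 2\beta (x_i-z_i)\hat{W}_i^TS_i(\mathbf{y}_i) - 2\beta\frac{(x_i-z_i)\hat{W}_i^TS_i(\mathbf{y}_i)}{\hat{W}_i^T\hat{W}_i}\,\hat{W}_i^T\hat{W}_i = 0 .
\]
Hence $\dot{\Phi}_i\le 0$ on the boundary in both sub-cases. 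The second-order case \eqref{ad3}--\eqref{ad4} is then identical after replacing the scalar $(x_i-z_i)$ by $(k_2\nabla_i f_i(\mathbf{y}_i)+v_i)$: the same two computations give $\dot{\Phi}_i<0$ under \eqref{ad3} and $\dot{\Phi}_i=0$ under \eqref{ad4} whenever $\Phi_i=W_{\text{max}}$. Combining all cases, the squared norm is non-increasing at every point where it attains $W_{\text{max}}$.

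The step I expect to require the most care is the invariance conclusion itself, because the closed-loop vector field switches discontinuously across the surface $\{\Phi_i=W_{\text{max}}\}$, so one cannot simply invoke smooth dependence on initial data. I would make this rigorous by a contradiction argument: assuming $\Phi_i(t_1)>W_{\text{max}}$ for some $t_1$, continuity of $\Phi_i$ (the solution $\hat{W}_i$ is absolutely continuous) produces a last crossing time $t_0<t_1$ with $\Phi_i(t_0)=W_{\text{max}}$ and $\Phi_i>W_{\text{max}}$ on $(t_0,t_1]$. The non-positivity of the right derivative of $\Phi_i$ at every boundary point, together with the fact that $\dot{\Phi}_i\equiv 0$ throughout the projection region (so $\Phi_i$ can only remain at $W_{\text{max}}$ there), contradicts such an immediate increase past $t_0$. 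This establishes $\hat{W}_i^T(t)\hat{W}_i(t)\le W_{\text{max}}$ for all $t\ge 0$ and all $i\in\mathbb{N}$, which is precisely the standard projection-operator boundedness property underlying the phrase ``by similar analysis in \cite{31}.''
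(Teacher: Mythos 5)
Your proposal is correct and follows essentially the same case analysis as the paper: computing $\dot{\Phi}_i = 2\hat{W}_i^T\dot{\hat{W}}_i$ on the boundary $\hat{W}_i^T\hat{W}_i = W_{\text{max}}$ and showing it is strictly negative under the unprojected law and exactly zero under the projected law, for both the first- and second-order update rules. Your added contradiction argument for positive invariance of $\{\Phi_i \le W_{\text{max}}\}$ is a welcome extra degree of rigor that the paper leaves implicit, but it does not change the substance of the argument.
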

\begin{proof}
The analysis follows that of \cite{31} and the details are provided in Section \ref{lemma_add_proof} for the convenience of  readers.
\end{proof}

In the subsequent section, the analytical investigation on the proposed method will be presented.
\subsection{Convergence Analysis}

Before we continue to present the convergence results, the following supportive result is provided.
\begin{Lemma}\label{lemma5}
Under Assumptions \ref{Assu_1}-\ref{ass5}, there exists a positive constant $k_2^*$ so that for each $k_2>k_2^*$, there exist positive constants $k_1^*(k_2),k_3^*(k_2)$ so that for $k_1>k_1^*,k_3>k_3^*,$ there exists a positive constant $k_4^*(k_2,k_3)$ so that for $k_4>k_4^*,$ $\mathbf{x}(t),$ $\mathbf{z}_f(t)$, $\mathbf{v}_s(t)$  and $\mathbf{y}(t)$ generated by the proposed method in \eqref{eqww1}-\eqref{eqww3} stay bounded given that their initial values are bounded.
\end{Lemma}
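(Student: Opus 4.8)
The plan is to rewrite the closed loop \eqref{eqww1}--\eqref{eqww3} in error coordinates and then build a single composite Lyapunov function whose derivative obeys the differential inequality $\dot V\le -cV+d$ of Lemma \ref{lemma2}, from which uniform boundedness follows at once. First I would introduce the consensus error $\tilde{\mathbf y}=\mathbf y-\mathbf 1_N\otimes\bar{\mathbf x}$, the first-order regulation error $\tilde{\mathbf x}_f=\mathbf x_f-\mathbf z_f$, the second-order velocity error $\tilde{\mathbf v}_s=\mathbf v_s+k_2[\nabla_i f_i(\mathbf y_i)]_{\mathbb N_s}$, and the action error $\bar{\mathbf x}-\mathbf x^*$ (recall $\mathcal P(\mathbf x^*)=0$, with $\mathbf x^*$ unique by Assumption \ref{Assu_2}). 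Since $\bar x_j=z_j$ for $j\in\mathbb N_f$ and $\bar x_j=x_j$ for $j\in\mathbb N_s$, the vector $\bar{\mathbf x}$ captures the action of both player types and obeys a perturbed gradient flow whose leading term is $-k_2\mathcal P(\bar{\mathbf x})$ and whose remainder is controlled by $\|\tilde{\mathbf y}\|$ (using global Lipschitzness of $\nabla_i f_i$, Assumption \ref{Assu_1}, to replace $\nabla_i f_i(\mathbf y_i)$ by $\nabla_i f_i(\bar{\mathbf x})$) and by $\|\tilde{\mathbf v}_s\|$. Crucially, the signals $\hat W_i^TS_i(\mathbf y_i)$, $\phi_i$ and $d_i(t)$ are all uniformly bounded -- the first by Lemma \ref{lemma_add} together with boundedness of the Gaussian activations, the second by $\delta$, and the third by Assumption \ref{ass5} -- so they enter only as an additive, constant-size forcing.

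The analysis then proceeds subsystem by subsystem. For the action error I take $V_1=\tfrac12\|\bar{\mathbf x}-\mathbf x^*\|^2$ and invoke strong monotonicity \eqref{cond_1} to get $\dot V_1\le -k_2 m\|\bar{\mathbf x}-\mathbf x^*\|^2+k_2\bar l\,\|\bar{\mathbf x}-\mathbf x^*\|\,\|\tilde{\mathbf y}\|+\|\bar{\mathbf x}-\mathbf x^*\|\,\|\tilde{\mathbf v}_s\|$, where $\bar l$ denotes the largest of the constants $\bar l_i$. For the consensus error I use $V_2=\tfrac12\tilde{\mathbf y}^T\mathbf P\tilde{\mathbf y}$ with the Lyapunov pair $\mathbf P,\mathbf Q$ furnished after Assumption \ref{ass_2}, which gives $\dot V_2\le -\tfrac{k_3}{2}\tilde{\mathbf y}^T\mathbf Q\tilde{\mathbf y}-\tilde{\mathbf y}^T\mathbf P(\mathbf 1_N\otimes\dot{\bar{\mathbf x}})$, the drift $\dot{\bar{\mathbf x}}$ being of size $\mathcal{O}(k_2)$. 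The first-order regulation error with $V_3=\tfrac12\|\tilde{\mathbf x}_f\|^2$ yields $\dot V_3\le -k_1\|\tilde{\mathbf x}_f\|^2+\|\tilde{\mathbf x}_f\|\,(\text{bounded}+\eta\|\mathbf x\|+k_2\|[\nabla_i f_i(\mathbf y_i)]_{\mathbb N_f}\|)$, and the velocity error with $V_4=\tfrac12\|\tilde{\mathbf v}_s\|^2$ gives $\dot V_4\le -k_4\|\tilde{\mathbf v}_s\|^2+\|\tilde{\mathbf v}_s\|\,(\text{bounded}+\eta\|\mathbf x\|+k_2\|\tfrac{d}{dt}[\nabla_i f_i(\mathbf y_i)]_{\mathbb N_s}\|)$. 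Here Assumption \ref{ass4} bounds the Hessian, so $\tfrac{d}{dt}\nabla_i f_i(\mathbf y_i)$ is controlled by $\|\dot{\mathbf y}\|=\mathcal{O}(k_3\|\tilde{\mathbf y}\|)$ via \eqref{eqww3}, making this last forcing of size $\mathcal{O}(k_2 k_3\|\tilde{\mathbf y}\|)$.

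I would then form $V=V_1+\lambda_2 V_2+\lambda_3 V_3+\lambda_4 V_4$ and choose the weights and gains in the stated nested order. Because $g_i$ and the gradients are globally Lipschitz they grow at most linearly, so after Young's inequality every cross term becomes a quadratic in the four error norms with coefficients polynomial in the gains; since $\|\mathbf x\|^2$ is itself dominated by $V$ up to a constant, no finite escape can occur and the estimate closes into $\dot V\le -cV+d$. The gains must enter as $k_2$, then $k_1,k_3$, then $k_4$ precisely because of this coupling: $k_2>k_2^*$ makes the contraction $k_2 m$ overpower the Lipschitz growth of $g_i$ and the gradient-difference feedback; $k_1>k_1^*(k_2)$ and $k_3>k_3^*(k_2)$ make the regulation and consensus subsystems contract faster than they are driven by the $\mathcal{O}(k_2)$ action drift; and $k_4>k_4^*(k_2,k_3)$ finally dominates the $\mathcal{O}(k_2 k_3)$ forcing identified above.

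The main obstacle is exactly this bookkeeping of the cross terms, namely verifying that one consistent choice of weights $\lambda_2,\lambda_3,\lambda_4$ and nested thresholds $k_2^*,k_1^*,k_3^*,k_4^*$ simultaneously renders the quadratic form in $(\|\bar{\mathbf x}-\mathbf x^*\|,\|\tilde{\mathbf y}\|,\|\tilde{\mathbf x}_f\|,\|\tilde{\mathbf v}_s\|)$ negative definite outside a bounded set; the amplification of the consensus drift by $k_2$ and of the velocity forcing by $k_2 k_3$ is what forces the particular ordering of the thresholds. Once negative definiteness is secured, Lemma \ref{lemma2} converts $\dot V\le -cV+d$ into a uniform bound on $V$, hence on $\bar{\mathbf x},\tilde{\mathbf y},\tilde{\mathbf x}_f,\tilde{\mathbf v}_s$, and therefore on $\mathbf x,\mathbf z_f,\mathbf v_s,\mathbf y$ through the coordinate definitions, completing the proof.
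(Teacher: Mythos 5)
Your proposal is correct and follows essentially the same route as the paper: the same error coordinates (in particular the shifted velocity $\bar{\mathbf v}_s=\mathbf v_s+k_2[\nabla_i f_i(\mathbf y_i)]_{\mathbb N_s}$), the same four Lyapunov components, the same use of strong monotonicity, Lemma \ref{lemma_add} and Assumptions \ref{Assu_1}--\ref{ass5} to bound the forcing terms, and the same nested gain-tuning order dictated by the $\mathcal{O}(k_2)$ and $\mathcal{O}(k_2k_3)$ cross terms. The only cosmetic differences are your free weights $\lambda_i$ and your closing via $\dot V\le -cV+d$ rather than the paper's $\dot V\le -cV$ outside a sublevel set, which yield the same boundedness conclusion.
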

\begin{proof}
Let $\bar{v}_i=k_2\nabla_i f_i(\mathbf{y}_i)+v_i$ for
$i\in\mathbb{N}_s$ and $\bar{\mathbf{v}}_s=[\bar{v}_i]_{\mathbb{N}_s}.$ Then,
\begin{equation}
\bar{\mathbf{v}}_s=\mathbf{v}_s+k_2[\nabla_i f_i(\mathbf{y}_i)]_{\mathbb{N}_s}.
\end{equation}

Therefore,
\begin{equation}
\begin{aligned}
\dot{\mathbf{x}}_s=&\bar{\mathbf{v}}_s-k_2[\nabla_i f_i(\mathbf{y}_i)]_{\mathbb{N}_s},\\
\dot{\bar{\mathbf{v}}}_s=&\dot{\mathbf{v}}_s+k_2H_1[\dot{\mathbf{y}}_i]_{\mathbb{N}_s}\\
=&-k_4\bar{\mathbf{v}}_s+k_2H_1[\dot{\mathbf{y}}_i]_{\mathbb{N}_s}-[\hat{W}_i^TS_i(\mathbf{y}_i)]_{\mathbb{N}_s}\\
&-[\phi_i]_{\mathbb{N}_s}+[g_i(\mathbf{x})]_{\mathbb{N}_s}+[d_i(t)]_{\mathbb{N}_s},
\end{aligned}
\end{equation}
where $H_1\in \mathbb{R}^{(N-n)\times N(N-n)}$ is a matrix whose $i$th row is  $[\mathbf{0}_{N(i-1)}^T, \nabla_{i1}^2f_{i}(\mathbf{y}_i),\cdots,\nabla_{iN}^2f_{i}(\mathbf{y}_i),\mathbf{0}_{N(N-n-i)}^T]$ for $i\in\{1,2,3,\cdots,N-n\}$.

To obtain the conclusion, define
$V=\sum_{i=1}^4 V_i,$ in which
\begin{equation}
\begin{aligned}
V_1=&\frac{1}{2}(\bar{\mathbf{x}}-\mathbf{x}^*)^T(\bar{\mathbf{x}}-\mathbf{x}^*),V_2=\frac{1}{2}\bar{\mathbf{v}}_s^T\bar{\mathbf{v}}_s,\\
V_3=&\frac{1}{2}(\mathbf{x}_f-\mathbf{z}_f)^T(\mathbf{x}_f-\mathbf{z}_f),\\
V_4=&(\mathbf{y}-\mathbf{1}_N\otimes \bar{\mathbf{x}})^T\mathbf{P} (\mathbf{y}-\mathbf{1}_N\otimes \bar{\mathbf{x}}).
\end{aligned}
\end{equation}
Then, by Assumption \ref{Assu_2},
\begin{equation}
\begin{aligned}
\dot{V}_1=&(\bar{\mathbf{x}}-\mathbf{x}^*)^T\dot{\bar{\mathbf{x}}}\\
=&(\bar{\mathbf{x}}-\mathbf{x}^*)[\dot{\mathbf{z}}_f^T,\dot{\mathbf{x}}_s^T]^T\\
=&-k_2(\bar{\mathbf{x}}-\mathbf{x}^*)^T[\nabla_i f_i(\mathbf{y}_i)]_{\mathbb{N}}\\
&+(\bar{\mathbf{x}}-\mathbf{x}^*)^T[\mathbf{0}_n^T,\bar{\mathbf{v}}_s^T]^T\\
=&-k_2(\bar{\mathbf{x}}-\mathbf{x}^*)^T(\mathcal{P}(\bar{\mathbf{x}})-\mathcal{P}(\mathbf{x}^*))\\
&+k_2(\bar{\mathbf{x}}-\mathbf{x}^*)^T(\mathcal{P}(\bar{\mathbf{x}})-[\nabla_i f_i(\mathbf{y}_i)]_{\mathbb{N}})\\
&+(\bar{\mathbf{x}}-\mathbf{x}^*)^T[\mathbf{0}_n^T,\bar{\mathbf{v}}_s^T]^T\\
\leq&-k_2m\Vert \bar{\mathbf{x}}-\mathbf{x}^*\Vert^2+\Vert \bar{\mathbf{x}}-\mathbf{x}^*\Vert\Vert \bar{\mathbf{v}}_s\Vert\\
&+k_2\max_{i\in\mathbb{N}}\{\bar{l}_i\}\Vert \bar{\mathbf{x}}-\mathbf{x}^*\Vert\Vert \mathbf{y}-\mathbf{1}_N\otimes \bar{\mathbf{x}}\Vert,
\end{aligned}
\end{equation}
where $\mathcal{P}(\bar{\mathbf{x}})=\mathcal{P}(\mathbf{x})|_{\mathbf{x}=\bar{\mathbf{x}}}$
and
\begin{equation}
\begin{aligned}
\dot{V}_2=&\bar{\mathbf{v}}_s^T(-k_4\bar{\mathbf{v}}_s+k_2H_1[\dot{\mathbf{y}}_i]_{\mathbb{N}_s})\\
&+\bar{\mathbf{v}}_s^T(-[\hat{W}_i^TS_i(\mathbf{y}_i)]_{\mathbb{N}_s}-[\phi_i]_{\mathbb{N}_s})\\
&+\bar{\mathbf{v}}_s^T([g_i(\mathbf{x})]_{\mathbb{N}_s}+[d_i(t)]_{\mathbb{N}_s})\\
=&-k_4\Vert \bar{\mathbf{v}}_s\Vert^2+k_2\bar{\mathbf{v}}_s^TH_1[\dot{\mathbf{y}}_i]_{\mathbb{N}_s}\\
&+\bar{\mathbf{v}}_s^T(-[\hat{W}_i^TS_i(\mathbf{y}_i)]_{\mathbb{N}_s}-[\phi_i]_{\mathbb{N}_s})\\
&+\bar{\mathbf{v}}_s^T([g_i(\mathbf{x})]_{\mathbb{N}_s}+[d_i(t)]_{\mathbb{N}_s})\\
\leq&-k_4\Vert \bar{\mathbf{v}}_s\Vert^2+a_s\Vert \bar{\mathbf{v}}_s\Vert+\bar{\mathbf{v}}_s^T([g_i(\mathbf{x})]_{\mathbb{N}_s}-[g_i(\mathbf{x}^*)]_{\mathbb{N}_s})\\
&+k_2k_3b\Vert \mathbf{y}-\mathbf{1}_N\otimes \bar{\mathbf{x}}\Vert\Vert \bar{\mathbf{v}}_s\Vert,
\end{aligned}
\end{equation}
in which $b=\text{sup}_{[\mathbf{y}_i]_{\mathbb{N}_s}\in \mathbb{R}^{(N-n)N}}\lVert H_1\lVert\lVert\mathcal{L}\otimes \mathbf{I}_{N\times N}+\mathcal{A}_0\lVert$,
$a_s=\sqrt{N-n}(\sqrt{q}\sqrt{W_{\text{max}}}+\delta+d+g)$, and $d,g$ are positive constants that satisfy $|d_i(t)|<d,|g_i(\mathbf{x}^*)|<g$ for $i\in\mathbb{N}$.

Moreover,
\begin{equation}
\begin{aligned}
\dot{V}_3=&(\mathbf{x}_f-\mathbf{z}_f)^T(\dot{\mathbf{x}}_f-\dot{\mathbf{z}}_f)\\
=&(\mathbf{x}_f-\mathbf{z}_f)^T(-k_1(\mathbf{x}_f-\mathbf{z}_f)+k_2[\nabla_i f_i(\mathbf{y}_i)]_{\mathbb{N}_f})\\
&+(\mathbf{x}_f-\mathbf{z}_f)^T(-[\hat{W}_i^TS_i(\mathbf{y}_i)]_{\mathbb{N}_f}-[\phi_i]_{\mathbb{N}_f})\\
&+(\mathbf{x}_f-\mathbf{z}_f)^T([g_i(\mathbf{x})]_{\mathbb{N}_f}+[d_i(t)]_{\mathbb{N}_f})\\
\leq&-k_1\Vert \mathbf{x}_f-\mathbf{z}_f\Vert^2+a_f\Vert \mathbf{x}_f-\mathbf{z}_f\Vert\\
&+k_2\sqrt{N}\max_{i\in\mathbb{N}}\{\bar{l}_i\}\Vert \mathbf{x}_f-\mathbf{z}_f\Vert\Vert \bar{\mathbf{x}}-\mathbf{x}^*\Vert\\
&+k_2\max_{i\in\mathbb{N}}\{\bar{l}_i\}\Vert \mathbf{x}_f-\mathbf{z}_f\Vert\Vert \mathbf{y}-\mathbf{1}_N\otimes \bar{\mathbf{x}}\Vert\\
&+(\mathbf{x}_f-\mathbf{z}_f)^T([g_i(\mathbf{x})]_{\mathbb{N}_f}-[g_i(\mathbf{x}^*)]_{\mathbb{N}_f}),
\end{aligned}
\end{equation}
where $a_f=\sqrt{n}(\sqrt{q}\sqrt{W_{\text{max}}}+\delta+d+g)$ and we have utilized that $\lVert[\nabla_i f_i(\mathbf{y}_i)]_{\mathbb{N}}\rVert=\lVert[\nabla_i f_i(\mathbf{y}_i)]_{\mathbb{N}}-\mathcal{P}(\bar{\mathbf{x}})+\mathcal{P}(\bar{\mathbf{x}})
-\mathcal{P}(\mathbf{x}^*) \rVert \leq \max_{i\in\mathbb{N}}\{\bar{l}_i\}\lVert \mathbf{y}-\mathbf{1}_N\otimes \bar{\mathbf{x}}\rVert+\sqrt{N}\max_{i\in\mathbb{N}}\{\bar{l}_i\}\lVert \bar{\mathbf{x}}-\mathbf{x}^*\rVert$ based on Assumption \ref{Assu_1}.

Furthermore,
\begin{equation}
\begin{aligned}
\dot{V}_4=&(\dot{\mathbf{y}}-\mathbf{1}_N\otimes \dot{\bar{\mathbf{x}}})^T\mathbf{P} (\mathbf{y}-\mathbf{1}_N\otimes \bar{\mathbf{x}})\\
&+(\mathbf{y}-\mathbf{1}_N\otimes \bar{\mathbf{x}})^T\mathbf{P} (\dot{\mathbf{y}}-\mathbf{1}_N\otimes \dot{\bar{\mathbf{x}}})\\
=&-k_3(\mathbf{y}-\mathbf{1}_N\otimes \bar{\mathbf{x}})^\text{T}(\mathcal{L}\otimes \mathbf{I}_{N\times N}+\mathcal{A}_0)\mathbf{P}(\mathbf{y}-\mathbf{1}_N\otimes \bar{\mathbf{x}})\\
&-k_3(\mathbf{y}-\mathbf{1}_N\otimes \bar{\mathbf{x}})^\text{T}\mathbf{P}(\mathcal{L}\otimes \mathbf{I}_{N\times N}+\mathcal{A}_0)(\mathbf{y}-\mathbf{1}_N\otimes \bar{\mathbf{x}})\\
&-2(\mathbf{y}-\mathbf{1}_N\otimes \mathbf{x})^\text{T}\mathbf{P}\mathbf{1}_N\otimes\dot{\bar{\mathbf{x}}}\\
\leq&-k_3\lambda_{min}(\mathbf{Q}){\lVert \mathbf{y}-\mathbf{1}_N \otimes \bar{\mathbf{x}} \rVert}^2\\
&+2k_2(\mathbf{y}-\mathbf{1}_N\otimes \bar{\mathbf{x}})^\text{T}\mathbf{P}\mathbf{1}_N\otimes [\nabla_i f_i(\mathbf{y}_i)]_{\mathbb{N}}\\
&-2(\mathbf{y}-\mathbf{1}_N\otimes \bar{\mathbf{x}})^\text{T}\mathbf{P}\mathbf{1}_N\otimes [\mathbf{0}_n^T,\bar{\mathbf{v}}_s^T]^T\\
\leq&-k_3\lambda_{min}(\mathbf{Q}){\lVert \mathbf{y}-\mathbf{1}_N \otimes \bar{\mathbf{x}} \rVert}^2\\
&+2k_2\sqrt{N}\max_{i\in\mathbb{N}}\{\bar{l}_i\}\lVert\mathbf{P }\rVert {\lVert \mathbf{y}-\mathbf{1}_N\otimes \bar{\mathbf{x}} \rVert}^2\\
&+2k_2N\max_{i\in\mathbb{N}}\{\bar{l}_i\}\lVert\mathbf{P }\rVert \lVert \mathbf{y}-\mathbf{1}_N\otimes \bar{\mathbf{x}} \rVert\lVert \bar{\mathbf{x}}-\mathbf{x}^* \rVert\\
&+2\sqrt{N}\lVert\mathbf{P }\rVert\lVert \mathbf{y}-\mathbf{1}_N\otimes \bar{\mathbf{x}} \rVert\lVert \bar{\mathbf{v}}_s \rVert,
\end{aligned}
\end{equation}
where the notation $\lambda_{min}(\mathbf{Q})$ denotes the minimum eigenvalue of $\mathbf{Q}$.
Hence,
\begin{equation}
\begin{aligned}
&\dot{V}\leq-k_2m\Vert \bar{\mathbf{x}}-\mathbf{x}^*\Vert^2-k_1\Vert \mathbf{x}_f-\mathbf{z}_f\Vert^2-k_4\Vert \bar{\mathbf{v}}_s\Vert^2\\
&-(k_3\lambda_{min}(\mathbf{Q})-2k_2\sqrt{N}\max_{i\in\mathbb{N}}\{\bar{l}_i\}\lVert\mathbf{P }\rVert){\lVert \mathbf{y}-\mathbf{1}_N \otimes \bar{\mathbf{x}} \rVert}^2\\
&+(1+\sqrt{N-n}\max_{i\in\mathbb{N}_s}\{\eta_i\})\Vert \bar{\mathbf{x}}-\mathbf{x}^*\Vert\Vert \bar{\mathbf{v}}_s\Vert+a_s\Vert \bar{\mathbf{v}}_s\Vert\\
&+(2k_2N\max_{i\in\mathbb{N}}\{\bar{l}_i\}\lVert\mathbf{P }\rVert+k_2\max_{i\in\mathbb{N}}\{\bar{l}_i\}) \\
&\times\lVert \mathbf{y}-\mathbf{1}_N\otimes \bar{\mathbf{x}} \rVert\lVert \bar{\mathbf{x}}-\mathbf{x}^* \rVert\\
&+(2\sqrt{N}\lVert\mathbf{P }\rVert+k_2k_3b)\lVert \mathbf{y}-\mathbf{1}_N\otimes \bar{\mathbf{x}} \rVert\lVert \bar{\mathbf{v}}_s \rVert\\
&+(k_2\sqrt{N}\max_{i\in\mathbb{N}}\{\bar{l}_i\}+\sqrt{n}\max_{i\in\mathbb{N}_f}\{\eta_i\})\Vert \mathbf{x}_f-\mathbf{z}_f\Vert\Vert \bar{\mathbf{x}}-\mathbf{x}^*\Vert\\
&+k_2\max_{i\in \mathbb{N}}\{\bar{l}_i\}\Vert \mathbf{x}_f-\mathbf{z}_f\Vert\Vert \mathbf{y}-\mathbf{1}_N\otimes \bar{\mathbf{x}}\Vert\\
&+a_f\Vert \mathbf{x}_f-\mathbf{z}_f\Vert+\sqrt{N-n}\max_{i\in\mathbb{N}_s}\{\eta_i\}||\bar{\mathbf{v}}_s||||\mathbf{x}_f-\mathbf{z}_f||\\
&+\sqrt{n}\max_{i\in\mathbb{N}_f}\{\eta_i\} ||\mathbf{x}_f-\mathbf{z}_f||^2.
\end{aligned}
\end{equation}

Therefore,
\begin{equation}
\begin{aligned}
\dot{V}\leq &-\bar{\Psi}_1||\bar{\mathbf{x}}-\mathbf{x}^*||^2-\bar{\Psi}_2||\mathbf{x}_f-\mathbf{z}_f||^2-\bar{\Psi}_3||\bar{\mathbf{v}}_s||^2\\
&-\bar{\Psi}_4||\mathbf{y}-\mathbf{1}_N\otimes \bar{\mathbf{x}}||^2+a_f||\mathbf{x}_f-\mathbf{z}_f||+a_s||\bar{\mathbf{v}}_s||,
\end{aligned}
\end{equation}
where $\bar{\Psi}_1=k_2m-\frac{1+\sqrt{N-n}\max_{i\in\mathbb{N}_s}\{\eta_i\}}{2}-1,$ $\bar{\Psi}_2=k_1-\sqrt{n}\max_{i\in\mathbb{N}_f}\{\eta_i\}-\frac{(k_2\sqrt{N}\max_{i\in\mathbb{N}}\{\bar{l}_i\}+\sqrt{n}\max_{i\in\mathbb{N}_f}\{\eta_i\})^2}{2}-\frac{(k_2\max_{i\in \mathbb{N}}\{\bar{l}_i\})^2}{2}-\frac{\sqrt{N-n}\max_{i\in\mathbb{N}_s}\{\eta_i\}}{2}$, $\bar{\Psi}_3=k_4-\frac{1+\sqrt{N-n}\max_{i\in\mathbb{N}_s}\{\eta_i\}}{2}-\frac{(2\sqrt{N}\lVert\mathbf{P }\rVert+k_2k_3b)^2}{2}-\frac{\sqrt{N-n}\max_{i\in\mathbb{N}_s}\{\eta_i\}}{2}$ and $\bar{\Psi}_4=k_3\lambda_{min}(\mathbf{Q})-2k_2\sqrt{N}\max_{i\in\mathbb{N}}\{\bar{l}_i\}\lVert\mathbf{P }\rVert-1-\frac{(2k_2N\max_{i\in\mathbb{N}}\{\bar{l}_i\}\lVert\mathbf{P }\rVert+k_2\max_{i\in\mathbb{N}}\{\bar{l}_i\})^2}{2}$.

Hence, by choosing $k_2$ to be sufficiently large, $\bar{\Psi}_1>0$. Then, for fixed $k_2$, we can choose $k_1$ and $k_3$ to be sufficiently large such that $\bar{\Psi}_2>0$ and $\bar{\Psi}_4>0$. Then, for fixed $k_1,k_2,k_3$, we can choose $k_4$ to be sufficiently large such that $\bar{\Psi}_3>0$. By such a tuning rule,
\begin{equation}
\dot{V}\leq-\frac{\min\{\bar{\Psi}_1,\bar{\Psi}_2,\bar{\Psi}_3,\bar{\Psi}_4\}}{\max\{\lambda_{max}(\mathbf{P}),\frac{1}{2}\}}V+a_f||\mathbf{x}_f-\mathbf{z}_f||+a_s||\bar{\mathbf{v}}_s||,
\end{equation}
i.e.,
\begin{equation}
\dot{V}\leq-\frac{\min\{\bar{\Psi}_1,\bar{\Psi}_2,\bar{\Psi}_3,\bar{\Psi}_4\}}{2\max\{\lambda_{max}(\mathbf{P}),\frac{1}{2}\}}V,
\end{equation}
for $\sqrt{V}\geq \frac{2(a_f+a_s)\max\{\lambda_{max}(\mathbf{P}),\frac{1}{2}\}}{\min\{\bar{\Psi}_1,\bar{\Psi}_2,\bar{\Psi}_3,\bar{\Psi}_4\}}$
from which the conclusion can be easily derived.

\end{proof}

From Lemma \ref{lemma5}, it can be concluded that $\mathbf{y}_i$ for $i\in\mathbb{N}$ would stay bounded given that the control gains are suitably chosen and the initial values of the variables are bounded. If this is the case, it is clear that for any positive constant $\bar{\varepsilon},$ there exist $W_i^*$ and $q_i$ that satisfy
\begin{equation}\label{eqww4}
g_i(\mathbf{y}_i)={W_i^*}^TS_i(\mathbf{y}_i)+\varepsilon_i,
  \end{equation}
where $\varepsilon_i<\bar{\varepsilon}$ as $\mathbf{y}_i$ belongs to a compact set.

Therefore, by \eqref{eqww1}-\eqref{eqww3} and \eqref{eqww4}, it is derived that for first-order integrators,
\begin{equation}\label{eqww11}
\begin{aligned}
\dot{\mathbf{x}}_f=&-k_1(\mathbf{x}_f-\mathbf{z}_f)-[\tilde{W}_i^TS_i(\mathbf{y}_i)]_{\mathbb{N}_f}\\
&-[\phi_i]_{\mathbb{N}_f}+[g_i(\mathbf{x})-g_i(\mathbf{y}_i)]_{\mathbb{N}_f}+[d_i(t)+\varepsilon_i]_{\mathbb{N}_f},\\
\dot{\mathbf{z}}_f=&-k_2[\nabla_i f_i(\mathbf{y}_i)]_{\mathbb{N}_f},
\end{aligned}
\end{equation}
and for second-order players,
\begin{equation}\label{eqww12}
\begin{aligned}
\dot{\mathbf{x}}_s=&\mathbf{v}_s,\\
\dot{\mathbf{v}}_s=&-k_4\mathbf{v}_s-k_2k_4[\nabla_i f_i(\mathbf{y}_i)]_{\mathbb{N}_s}-[\tilde{W}_i^TS_i(\mathbf{y}_i)]_{\mathbb{N}_s}\\
&-[\phi_i]_{\mathbb{N}_s}+[g_i(\mathbf{x})-g_i(\mathbf{y}_i)]_{\mathbb{N}_s}+[d_i(t)+\varepsilon_i]_{\mathbb{N}_s}.
\end{aligned}
\end{equation}
In addition,
\begin{equation}\label{eqww13}
\dot{\mathbf{y}}=-k_3(\mathcal{L}\otimes \mathbf{I}_{N\times N}+\mathcal{A}_0)(\mathbf{y}-\mathbf{1}_N\otimes \bar{\mathbf{x}}).
\end{equation}

The subsequent supportive lemmas are given before we provide the convergence results.
\begin{Lemma}\label{lemma44}
Suppose that $W_i^{*T}W_i^*
\leq W_{\text{max}}$ for $i\in\mathbb{N}$. Then, for $i\in\mathbb{N}_f,$
\begin{equation}
{\tilde{W}_i}^T\left(\frac{\dot{\hat{W}}_i}{\beta}-S_i(\mathbf{y}_i)(x_i-z_i)\right)\leq 0,
\end{equation}
and  for  $i\in\mathbb{N}_s,$
\begin{equation}
{\tilde{W}_i}^T\left(\frac{\dot{\hat{W}}_i}{\beta}-S_i(\mathbf{y}_i)\bar{v}_i\right)\leq0,
\end{equation}
in which   $\tilde{W}_i=\hat{W}_i-W^*_i$ for $i\in\mathbb{N}$.
\end{Lemma}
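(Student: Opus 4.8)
The plan is to establish each inequality by a case analysis following the two branches of the relevant adaptive law. The two statements are structurally identical: for first-order players the scalar $x_i-z_i$ plays exactly the role that $\bar v_i=k_2\nabla_i f_i(\mathbf y_i)+v_i$ plays for second-order players, and \eqref{ad1}--\eqref{ad2} differ from \eqref{ad3}--\eqref{ad4} only through this substitution. I would therefore carry out the first-order case ($i\in\mathbb{N}_f$) in full and then obtain the second-order case ($i\in\mathbb{N}_s$) by replacing $x_i-z_i$ with $\bar v_i$ throughout.

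First I would dispose of the unprojected branch \eqref{ad1}, which is active when $\hat W_i^T\hat W_i<W_{\text{max}}$, or when $\hat W_i^T\hat W_i=W_{\text{max}}$ and $(x_i-z_i)\hat W_i^TS_i(\mathbf y_i)<0$. In this branch $\dot{\hat W}_i=\beta S_i(\mathbf y_i)(x_i-z_i)$ exactly, so the parenthesized quantity $\dot{\hat W}_i/\beta-S_i(\mathbf y_i)(x_i-z_i)$ vanishes and the claimed inequality holds with equality for every $\tilde W_i$. All the content lies in the projection branch \eqref{ad2}, active when $\hat W_i^T\hat W_i=W_{\text{max}}$ and $(x_i-z_i)\hat W_i^TS_i(\mathbf y_i)\ge 0$. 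There, subtracting $S_i(\mathbf y_i)(x_i-z_i)$ cancels the leading term of $\dot{\hat W}_i/\beta$ and leaves only the correction term, giving
\[
\tilde W_i^T\!\left(\frac{\dot{\hat W}_i}{\beta}-S_i(\mathbf y_i)(x_i-z_i)\right)
=-\,\frac{(x_i-z_i)\,\hat W_i^TS_i(\mathbf y_i)}{\hat W_i^T\hat W_i}\,\tilde W_i^T\hat W_i,
\]
so it suffices to determine the sign of each of the three scalar factors on the right.

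Two of the signs are immediate in this branch: the denominator satisfies $\hat W_i^T\hat W_i=W_{\text{max}}>0$, and the activation condition gives the numerator $(x_i-z_i)\hat W_i^TS_i(\mathbf y_i)\ge 0$. The step I expect to be the only nontrivial one is showing $\tilde W_i^T\hat W_i\ge 0$. For this I would write $\tilde W_i^T\hat W_i=\hat W_i^T\hat W_i-W_i^{*T}\hat W_i=W_{\text{max}}-W_i^{*T}\hat W_i$ and control the cross term by Cauchy--Schwarz, $W_i^{*T}\hat W_i\le\sqrt{W_i^{*T}W_i^*}\,\sqrt{\hat W_i^T\hat W_i}\le\sqrt{W_{\text{max}}}\,\sqrt{W_{\text{max}}}=W_{\text{max}}$, where the final bound uses the hypothesis $W_i^{*T}W_i^*\le W_{\text{max}}$ together with $\hat W_i^T\hat W_i=W_{\text{max}}$. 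Hence $\tilde W_i^T\hat W_i\ge 0$, and the displayed right-hand side is the negative of a product with nonnegative numerator factors and positive denominator, so it is $\le 0$, as claimed. Repeating the identical computation with $\bar v_i$ in place of $x_i-z_i$---now using \eqref{ad3}--\eqref{ad4} and the activation condition $\bar v_i\,\hat W_i^TS_i(\mathbf y_i)\ge 0$---yields the second-order inequality and completes the argument.
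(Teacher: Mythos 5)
Your proposal is correct and follows essentially the same route as the paper: the same two-branch case analysis (the unprojected branch gives equality with zero, the projection branch reduces to the sign of $-\frac{(x_i-z_i)\hat W_i^TS_i(\mathbf y_i)}{\hat W_i^T\hat W_i}\,\tilde W_i^T\hat W_i$), with the whole matter resting on $\tilde W_i^T\hat W_i\geq 0$ on the boundary $\hat W_i^T\hat W_i=W_{\text{max}}$. The only (immaterial) difference is that you establish that last inequality via Cauchy--Schwarz on $W_i^{*T}\hat W_i$, whereas the paper uses the polarization identity $\tilde W_i^T\hat W_i=\tfrac12(\hat W_i^T\hat W_i-W_i^{*T}W_i^*+\tilde W_i^T\tilde W_i)$; both are one-line and equally valid.
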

\begin{proof}
See Section \ref{lemma44_proof}.
\end{proof}

\begin{Lemma}\label{lemma45}
Let $\delta \geq |\varepsilon_i|+|d_i(t)|$ for $i\in\mathbb{N}$ and $t\geq 0$. Then, for each $i\in\mathbb{N}_f,$
\begin{equation}
(x_i-z_i)(d_i(t)+\varepsilon_i-\phi_i)\leq \epsilon,\label{bds1}
\end{equation}
and for each $i\in\mathbb{N}_s,$
\begin{equation}
\bar{v}_i(d_i(t)+\varepsilon_i-\phi_i)\leq \epsilon. \label{bds2}
\end{equation}
\end{Lemma}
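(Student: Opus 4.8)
The plan is to prove both inequalities by reducing them, through a single substitution, to the $\tanh$ estimate of Lemma~\ref{lemma3}. The key observation is that in each case $\phi_i$ has the form $\delta\tanh(\mathcal{K}\delta\,\xi_i/\epsilon)$, where $\xi_i=x_i-z_i$ for $i\in\mathbb{N}_f$ and $\xi_i=\bar v_i=k_2\nabla_i f_i(\mathbf{y}_i)+v_i$ for $i\in\mathbb{N}_s$, so that the quantity to be bounded is $\xi_i(d_i(t)+\varepsilon_i-\phi_i)$ in both cases. I would therefore treat the two statements in parallel, doing the algebra once for a generic $\xi_i$.

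For the first-order case I would first split
\[
(x_i-z_i)(d_i(t)+\varepsilon_i-\phi_i)=(x_i-z_i)(d_i(t)+\varepsilon_i)-(x_i-z_i)\phi_i .
\]
Using the hypothesis $\delta\geq|\varepsilon_i|+|d_i(t)|\geq|d_i(t)+\varepsilon_i|$ (triangle inequality), the first term is at most $\delta\,|x_i-z_i|$. For the second term I would introduce $\eta=\mathcal{K}\delta(x_i-z_i)$, so that $x_i-z_i=\eta/(\mathcal{K}\delta)$ and hence
\[
\delta|x_i-z_i|-(x_i-z_i)\phi_i=\frac{1}{\mathcal{K}}\Big(|\eta|-\eta\tanh\!\big(\tfrac{\eta}{\epsilon}\big)\Big).
\]
Applying Lemma~\ref{lemma3} with this $\eta$ gives $|\eta|-\eta\tanh(\eta/\epsilon)\leq\mathcal{K}\epsilon$, and the factor $\mathcal{K}$ cancels, yielding the desired bound $\leq\epsilon$, which is \eqref{bds1}.

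The second-order bound \eqref{bds2} follows by the identical computation with $\bar v_i$ in place of $x_i-z_i$: since $\delta\geq|d_i(t)+\varepsilon_i|$ again controls $\bar v_i(d_i(t)+\varepsilon_i)\leq\delta|\bar v_i|$, and the substitution $\eta=\mathcal{K}\delta\bar v_i$ turns $\delta|\bar v_i|-\bar v_i\phi_i$ into $\tfrac{1}{\mathcal{K}}(|\eta|-\eta\tanh(\eta/\epsilon))\leq\epsilon$. There is no genuine obstacle here; the only point requiring care is the bookkeeping of the constant $\mathcal{K}$ in the scaling $\eta=\mathcal{K}\delta\,\xi_i$, which is precisely the factor that makes Lemma~\ref{lemma3} cancel the $\mathcal{K}$ and produce the clean right-hand side $\epsilon$ rather than $\mathcal{K}\epsilon$. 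I would also note explicitly that the hypothesis $\delta\ge|\varepsilon_i|+|d_i(t)|$ is the single place where the assumed bounds on the disturbance and the neural-network approximation error (Assumption~\ref{ass5} together with \eqref{eqww4}) enter the argument.
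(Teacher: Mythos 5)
Your proposal is correct and follows essentially the same route as the paper: split the product, bound $|d_i(t)+\varepsilon_i|\leq\delta$ via the triangle inequality, and invoke Lemma~\ref{lemma3}. Your explicit substitution $\eta=\mathcal{K}\delta\,\xi_i$, which makes the factor $\mathcal{K}$ cancel and turns the bound $\mathcal{K}\epsilon$ into $\epsilon$, is exactly the step the paper leaves implicit in its final ``$\leq\epsilon$ by Lemma~\ref{lemma3}''.
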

\begin{proof}
See Section \ref{lemma45_proof}.
\end{proof}

We are now well prepared to provide the convergence analysis for the system in \eqref{eqww11}-\eqref{eqww13}.

\begin{Theorem}\label{main}
Assume that Assumptions \ref{Assu_1}-\ref{ass5} hold and $\delta \geq |\varepsilon_i|+|d_i(t)|$, $W_i^{*T}W_i^*\leq W_{\text{max}}$ for $i\in\mathbb{N}$, $t\geq 0$. Then, for any pair of positive constants $\Lambda$ and $\Xi$, there exist positive constants $\beta^*$ and $k_2^*$ so that for $\beta>\beta^*$ and $k_2
>k_2^*$, there exist positive constants $k_1^*$ and $k_3^*$ so that for $k_1>k_1^*(k_2),k_3>k_3^*(k_2)$, there exists a positive constant $k_4^*(k_2,k_3)$  so that for $k_4>k_4^*,$
\begin{equation}
\Vert\mathbf{x}(t)-\mathbf{x}^*\Vert\leq\Xi,\forall t>T,
\end{equation}
for some $T\geq 0$ given that $\Vert [(\bar{\mathbf{x}}(0)-\mathbf{x}^*)^T, \mathbf{v}_s^T(0), (\mathbf{y}(0)-\mathbf{1}_N\otimes \bar{\mathbf{x}}(0))^T, (\mathbf{x}_f(0)-\mathbf{z}_f(0))^T]^T\Vert+\sum_{i=1}^N {\tilde{W}_i(0)}^T\tilde{W}_i(0)\leq\Lambda$.
\end{Theorem}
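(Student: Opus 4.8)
The plan is to push the boundedness argument of Lemma \ref{lemma5} one step further, now tracking the neural-network weight errors explicitly and invoking the projection and $\tanh$ estimates. I would work with the approximated closed loop \eqref{eqww11}--\eqref{eqww13}, which is legitimate precisely because Lemma \ref{lemma5} already confines every $\mathbf{y}_i$ to a compact set, so that the decomposition \eqref{eqww4} holds there with $\varepsilon_i<\bar\varepsilon$. To the composite function $V=V_1+V_2+V_3+V_4$ of Lemma \ref{lemma5} I would append the weight-error term
\begin{equation}
V_5=\frac{1}{2\beta}\sum_{i=1}^N\tilde{W}_i^T\tilde{W}_i ,
\end{equation}
and examine $\dot V=\sum_{j=1}^5\dot V_j$ along \eqref{eqww11}--\eqref{eqww13}.

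The heart of the computation is two cancellations unavailable in Lemma \ref{lemma5}. First, the cross terms $-(x_i-z_i)\tilde{W}_i^TS_i(\mathbf{y}_i)$ appearing in $\dot V_3$ and $-\bar v_i\tilde{W}_i^TS_i(\mathbf{y}_i)$ in $\dot V_2$ are exactly the quantities that Lemma \ref{lemma44} dominates by $\tfrac{1}{\beta}\tilde{W}_i^T\dot{\hat W}_i$; hence $\dot V_5$ absorbs them and the entire weight contribution to $\dot V$ is nonpositive. Second, the disturbance-plus-approximation products $(x_i-z_i)(d_i+\varepsilon_i-\phi_i)$ and $\bar v_i(d_i+\varepsilon_i-\phi_i)$ are each at most $\epsilon$ by Lemma \ref{lemma45}, so they contribute at most $N\epsilon$ in total. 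Every remaining term---the gradient coupling through $[\nabla_if_i(\mathbf{y}_i)]$, the velocity coupling $\bar{\mathbf{v}}_s$, and the Lipschitz residuals $g_i(\mathbf{x})-g_i(\mathbf{y}_i)$, which Assumption \ref{ass5} bounds by $\eta_i(\|\mathbf{x}_f-\mathbf{z}_f\|+\|\mathbf{y}-\mathbf{1}_N\otimes\bar{\mathbf{x}}\|)$---is quadratic and is split by Young's inequality exactly as before. The outcome is
\begin{equation}
\dot V\le-\bar\Psi_1\|\bar{\mathbf{x}}-\mathbf{x}^*\|^2-\bar\Psi_2\|\mathbf{x}_f-\mathbf{z}_f\|^2-\bar\Psi_3\|\bar{\mathbf{v}}_s\|^2-\bar\Psi_4\|\mathbf{y}-\mathbf{1}_N\otimes\bar{\mathbf{x}}\|^2+N\epsilon ,
\end{equation}
with the same coefficients $\bar\Psi_j$ as in Lemma \ref{lemma5}; the only structural change is that the residual $a_f\|\mathbf{x}_f-\mathbf{z}_f\|+a_s\|\bar{\mathbf{v}}_s\|$ has shrunk to the \emph{constant} $N\epsilon$. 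I would then reuse the sequential tuning of Lemma \ref{lemma5}---enlarge $k_2$, then $k_1,k_3$, then $k_4$---to make $\bar\Psi_1,\dots,\bar\Psi_4$ positive.

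To extract convergence rather than mere boundedness I would note that $V_5$ carries no matching negative term, but by Lemma \ref{lemma_add} together with $W_i^{*T}W_i^*\le W_{\text{max}}$ it is uniformly bounded, $V_5\le 2NW_{\text{max}}/\beta$. Writing the negative quadratic form as $-c'(V-V_5)$ with $c'=\min_j\bar\Psi_j/\max\{\lambda_{max}(\mathbf{P}),\tfrac12\}$ gives
\begin{equation}
\dot V\le-c'V+\Big(\frac{2c'NW_{\text{max}}}{\beta}+N\epsilon\Big),
\end{equation}
so Lemma \ref{lemma2} produces $V(t)\le V(0)e^{-c't}+\big(\tfrac{2NW_{\text{max}}}{\beta}+\tfrac{N\epsilon}{c'}\big)$. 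Since $\bar{\mathbf{x}}$ stacks $\mathbf{z}_f$ and $\mathbf{x}_s$, a triangle inequality on the first-order block yields $\|\mathbf{x}-\mathbf{x}^*\|^2\le 4(V_1+V_3)\le 4V$, whence the ultimate bound on $\|\mathbf{x}-\mathbf{x}^*\|^2$ is $4\big(\tfrac{2NW_{\text{max}}}{\beta}+\tfrac{N\epsilon}{c'}\big)$. Choosing $\beta>\beta^*$ large enough to defeat the first summand and $\epsilon$ small enough for the second drives this below $\Xi^2$, while $V(0)$ is controlled by the hypothesis $\Lambda$ (the state part contributes $\le\max\{\lambda_{max}(\mathbf{P}),\tfrac12\}\Lambda^2$ and $V_5(0)\le\Lambda/(2\beta)$), so the $\Xi$-neighbourhood is entered after a finite $T$.

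The main obstacle, inherited from Lemma \ref{lemma5}, is the triangular gain selection: the indefinite cross terms couple $\|\bar{\mathbf{x}}-\mathbf{x}^*\|$, $\|\mathbf{x}_f-\mathbf{z}_f\|$, $\|\bar{\mathbf{v}}_s\|$ and the consensus error, and each must be dominated by a diagonal negative term in the correct order without a later choice undoing an earlier one. The genuinely new difficulty is that the adaptive term $V_5$ never decays, so accuracy cannot come from the rate $c'$ alone; it is the combination of the projection bound (forcing $V_5=O(1/\beta)$ uniformly) with a large $\beta$ that annihilates the weight-induced offset, which is exactly why $\beta^*$ enters the statement alongside the feedback gains.
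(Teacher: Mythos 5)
Your proposal is correct and follows essentially the same route as the paper: the five-term Lyapunov function with $V_5=\frac{1}{2\beta}\sum_i\tilde{W}_i^T\tilde{W}_i$, absorption of the weight cross-terms via Lemma \ref{lemma44}, the $N\epsilon$ bound from Lemma \ref{lemma45}, Young splittings with sequential gain tuning, and Lemma \ref{lemma2} to conclude. The only (immaterial) difference is bookkeeping for the non-decaying $V_5$: you bound $c'V_5\le 2c'NW_{\text{max}}/\beta$ directly, whereas the paper adds and subtracts $\sum_i\tilde{W}_i^T\tilde{W}_i$ so that $-2\beta V_5$ enters the decay rate $K$ and the constant $4NW_{\text{max}}$ joins the offset $(N\epsilon+4NW_{\text{max}})/K$; both yield the same conclusion with large $\beta$.
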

\begin{Proof}
Let $V=\sum_{i=1}^5V_i,$
where
\begin{equation}
\begin{aligned}
V_1=&\frac{1}{2}( \bar{\mathbf{x}}-\mathbf{x}^*)^T( \bar{\mathbf{x}}-\mathbf{x}^*),V_2=\frac{1}{2}\mathbf{\bar{v}} ^T_s\mathbf{\bar{v}}_s,\\
V_3=&\frac{1}{2}(\mathbf{x}_f-\mathbf{z}_f)^T(\mathbf{x}_f-\mathbf{z}_f),\\
V_4=&(\mathbf{y}-\mathbf{1}_N\otimes \bar{\mathbf{x}})^T\mathbf{P} (\mathbf{y}-\mathbf{1}_N\otimes \bar{\mathbf{x}}),\\
V_5=&\frac{1}{2\beta}\sum_{i=1}^N {\tilde{W}_i}^T\tilde{W}_i.
\end{aligned}
\end{equation}
Then, following the analysis in Lemma \ref{lemma5}, we get that
\begin{equation}
\begin{aligned}
\dot{V}_1\leq&-k_2m\Vert \bar{\mathbf{x}}-\mathbf{x}^*\Vert^2+\Vert \bar{\mathbf{x}}-\mathbf{x}^*\Vert\Vert \bar{\mathbf{v}}_s\Vert\\
&+k_2\max_{i\in\mathbb{N}}\{\bar{l}_i\}\Vert \bar{\mathbf{x}}-\mathbf{x}^*\Vert\Vert \mathbf{y}-\mathbf{1}_N\otimes \bar{\mathbf{x}}\Vert,
\end{aligned}
\end{equation}
and
\begin{equation}
\begin{aligned}
\dot{V}_2=&\bar{\mathbf{v}}_s^T\dot{\bar{\mathbf{v}}}_s\\
=&\bar{\mathbf{v}}_s^T(-k_4\bar{\mathbf{v}}_s+k_2H_1[\dot{\mathbf{y}}_i]_{\mathbb{N}_s})\\
&+\bar{\mathbf{v}}_s^T(-[\tilde{W}_i^TS_i(\mathbf{y}_i)]_{\mathbb{N}_s}-[\phi_i]_{\mathbb{N}_s}+[d_i(t)+\varepsilon_i]_{\mathbb{N}_s})\\
&+\bar{\mathbf{v}}_s^T[g_i(\mathbf{x})-g_i(\mathbf{y}_i)]_{\mathbb{N}_s}\\
=&-k_4\Vert \bar{\mathbf{v}}_s\Vert^2+k_2\bar{\mathbf{v}}_s^TH_1[\dot{\mathbf{y}}_i]_{\mathbb{N}_s}\\
&+\bar{\mathbf{v}}_s^T(-[\tilde{W}_i^TS_i(\mathbf{y}_i)]_{\mathbb{N}_s}-[\phi_i]_{\mathbb{N}_s}+[d_i(t)+\varepsilon_i]_{\mathbb{N}_s})\\
&+\bar{\mathbf{v}}_s^T[g_i(\mathbf{x})-g_i(\mathbf{y}_i)]_{\mathbb{N}_s}\\
\leq&-k_4\Vert \bar{\mathbf{v}}_s\Vert^2+k_2k_3b\Vert \mathbf{y}-\mathbf{1}_N\otimes \bar{\mathbf{x}}\Vert\Vert \bar{\mathbf{v}}_s\Vert\\
&-\bar{\mathbf{v}}_s^T[\tilde{W}_i^TS_i(\mathbf{y}_i)]_{\mathbb{N}_s}+(N-n)\epsilon\\
&+\bar{\mathbf{v}}_s^T[g_i(\mathbf{x})-g_i(\mathbf{y}_i)]_{\mathbb{N}_s},
\end{aligned}
\end{equation}
where the result in Lemma \ref{lemma45} has been utilized.

Moreover,
\begin{equation}
\begin{aligned}
\dot{V}_3=&(\mathbf{x}_f-\mathbf{z}_f)^T(\dot{\mathbf{x}}_f-\dot{\mathbf{z}}_f)\\
=&(\mathbf{x}_f-\mathbf{z}_f)^T(-k_1(\mathbf{x}_f-\mathbf{z}_f)+k_2[\nabla_i f_i(\mathbf{y}_i)]_{\mathbf{N}_f})\\
&+(\mathbf{x}_f-\mathbf{z}_f)^T(-[\tilde{W}_i^TS_i(\mathbf{y}_i)]_{\mathbb{N}_f}-[\phi_i]_{\mathbb{N}_f})\\
&+(\mathbf{x}_f-\mathbf{z}_f)^T[d_i(t)+\varepsilon_i]_{\mathbb{N}_f})\\
&+(\mathbf{x}_f-\mathbf{z}_f)^T[g_i(\mathbf{x})-g_i(\mathbf{y}_i)]_{\mathbb{N}_f}\\
\leq&-k_1\Vert \mathbf{x}_f-\mathbf{z}_f\Vert^2+n\epsilon\\
&+k_2\sqrt{N}\max_{i\in\mathbb{N}}\{\bar{l}_i\}\Vert \mathbf{x}_f-\mathbf{z}_f\Vert\Vert \bar{\mathbf{x}}-\mathbf{x}^*\Vert\\
&+k_2\max_{i\in \mathbb{N}}\{\bar{l}_i\}\Vert \mathbf{x}_f-\mathbf{z}_f\Vert\Vert \mathbf{y}-\mathbf{1}_N\otimes \bar{\mathbf{x}}\Vert\\
&-(\mathbf{x}_f-\mathbf{z}_f)^T[\tilde{W}_i^TS_i(\mathbf{y}_i)]_{\mathbb{N}_f}\\
&+(\mathbf{x}_f-\mathbf{z}_f)^T[g_i(\mathbf{x})-g_i(\mathbf{y}_i)]_{\mathbb{N}_f},
\end{aligned}
\end{equation}
where the result in Lemma \ref{lemma45} has been utilized
and
\begin{equation}
\begin{aligned}
\dot{V}_4\leq&-k_3\lambda_{min}(\mathbf{Q}){\lVert \mathbf{y}-\mathbf{1}_N \otimes \bar{\mathbf{x}} \rVert}^2\\
&+2k_2\sqrt{N}\max_{i\in\mathbb{N}}\{\bar{l}_i\}\lVert\mathbf{P }\rVert {\lVert \mathbf{y}-\mathbf{1}_N\otimes \bar{\mathbf{x}} \rVert}^2\\
&+2k_2N\max_{i\in\mathbb{N}}\{\bar{l}_i\}\lVert\mathbf{P }\rVert \lVert \mathbf{y}-\mathbf{1}_N\otimes \bar{\mathbf{x}} \rVert\lVert \bar{\mathbf{x}}-\mathbf{x}^* \rVert\\
&+2\sqrt{N}\lVert\mathbf{P }\rVert\lVert \mathbf{y}-\mathbf{1}_N\otimes \bar{\mathbf{x}} \rVert\lVert \bar{\mathbf{v}}_s \rVert.\\
\end{aligned}
\end{equation}
Furthermore,
\begin{equation}
\begin{aligned}
\dot{V}_5=&\sum_{i=1}^N\frac{{\tilde{W}_i}^T\dot{\tilde{W}}_i}{\beta}\\
=&\sum_{i=1}^n\frac{{\tilde{W}_i}^T\dot{\hat{W}}_i}{\beta}+\sum_{i=n+1}^N\frac{{\tilde{W}_i}^T\dot{\hat{W}}_i}{\beta}.\\
\end{aligned}
\end{equation}
Hence,
\begin{equation}
\begin{aligned}
&\dot{V}\leq-k_2m\Vert \bar{\mathbf{x}}-\mathbf{x}^*\Vert^2-k_1\Vert \mathbf{x}_f-\mathbf{z}_f\Vert^2-k_4\Vert \bar{\mathbf{v}}_s\Vert^2\\
&-(k_3\lambda_{min}(\mathbf{Q})-2k_2\sqrt{N}\max_{i\in\mathbb{N}}\{\bar{l}_i\}\lVert\mathbf{P }\rVert){\lVert \mathbf{y}-\mathbf{1}_N \otimes \bar{\mathbf{x}} \rVert}^2\\
&+\Vert \bar{\mathbf{x}}-\mathbf{x}^*\Vert\Vert \bar{\mathbf{v}}_s\Vert+N\epsilon+(2k_2N\max_{i\in\mathbb{N}}\{\bar{l}_i\}\lVert\mathbf{P }\rVert\\
&+k_2\max_{i\in \mathbb{N}}\{\bar{l}_i\}) \lVert \mathbf{y}-\mathbf{1}_N\otimes \bar{\mathbf{x}} \rVert\lVert \bar{\mathbf{x}}-\mathbf{x}^* \rVert\\
&+(2\sqrt{N}\lVert\mathbf{P }\rVert+k_2k_3b)\lVert \mathbf{y}-\mathbf{1}_N\otimes \bar{\mathbf{x}} \rVert\lVert \bar{\mathbf{v}}_s \rVert\\
&+k_2\sqrt{N}\max_{i\in\mathbb{N}}\{\bar{l}_i\}\Vert \mathbf{x}_f-\mathbf{z}_f\Vert\Vert \bar{\mathbf{x}}-\mathbf{x}^*\Vert\\
&+k_2\max_{i\in \mathbb{N}}\{\bar{l}_i\}\Vert \mathbf{x}_f-\mathbf{z}_f\Vert\Vert \mathbf{y}-\mathbf{1}_N\otimes \bar{\mathbf{x}}\Vert\\
&+\sum_{i=1}^n{\tilde{W}_i}^T\left(\frac{\dot{\hat{W}}_i}{\beta}-S_i(\mathbf{y}_i)(x_i-z_i)\right)\\
&+\sum_{i=n+1}^N{\tilde{W}_i}^T\left(\frac{\dot{\hat{W}}_i}{\beta}-S_i(\mathbf{y}_i)\bar{v}_i\right)\\
&+(\mathbf{x}_f-\mathbf{z}_f)^T[g_i(\mathbf{x})-g_i(\mathbf{y}_i)]_{\mathbb{N}_f}\\
&+\bar{\mathbf{v}}_s^T[g_i(\mathbf{x})-g_i(\mathbf{y}_i)]_{\mathbb{N}_s}.
\end{aligned}
\end{equation}

As $||\tilde{W}_i||=||\hat{W}_i-W_i^*||\leq ||W_i^*||+||\hat{W}_i||\leq 2\sqrt{W_{\text{max}}}$, we get that $4NW_{\text{max}}-\sum_{i=1}^N\tilde{W}_i^T\tilde{W}_i\geq 0.$ Hence, by further  utilizing the results in Lemma \ref{lemma44},
\begin{equation}
\begin{aligned}
&\dot{V} \leq-k_2m\Vert \bar{\mathbf{x}}-\mathbf{x}^*\Vert^2-k_1\Vert \mathbf{x}_f-\mathbf{z}_f\Vert^2-k_4\Vert \bar{\mathbf{v}}_s\Vert^2\\
&-(k_3\lambda_{min}(\mathbf{Q})-2k_2\sqrt{N}\max_{i\in\mathbb{N}}\{\bar{l}_i\}\lVert\mathbf{P }\rVert){\lVert \mathbf{y}-\mathbf{1}_N \otimes \bar{\mathbf{x}} \rVert}^2\\
&+(2k_2N\max_{i\in\mathbb{N}}\{\bar{l}_i\}\lVert\mathbf{P }\rVert+k_2\max_{i\in\mathbb{N}}\{\bar{l}_i\}) \\
&\times \lVert \mathbf{y}-\mathbf{1}_N\otimes \bar{\mathbf{x}} \rVert\lVert \bar{\mathbf{x}}-\mathbf{x}^* \rVert\\
&+(2\sqrt{N}\lVert\mathbf{P }\rVert+k_2k_3b)\lVert \mathbf{y}-\mathbf{1}_N\otimes \bar{\mathbf{x}} \rVert\lVert \bar{\mathbf{v}}_s \rVert\\
&+k_2\sqrt{N}\max_{i\in\mathbb{N}}\{\bar{l}_i\}\Vert \mathbf{x}_f-\mathbf{z}_f\Vert\Vert \bar{\mathbf{x}}-\mathbf{x}^*\Vert+\Vert \bar{\mathbf{x}}-\mathbf{x}^*\Vert\Vert \bar{\mathbf{v}}_s\Vert\\
&+k_2\max_{i\in\mathbb{N}}\{\bar{l}_i\}\Vert \mathbf{x}_f-\mathbf{z}_f\Vert\Vert \mathbf{y}-\mathbf{1}_N\otimes \bar{\mathbf{x}}\Vert\\
&-\sum_{i=1}^N {\tilde{W}_i}^T\tilde{W}_i+N\epsilon+4NW_{\text{max}}\\
&+(\mathbf{x}_f-\mathbf{z}_f)^T[g_i(\mathbf{x})-g_i(\mathbf{y}_i)]_{\mathbb{N}_f}\\
&+\bar{\mathbf{v}}_s^T[g_i(\mathbf{x})-g_i(\mathbf{y}_i)]_{\mathbb{N}_s}.
\end{aligned}
\end{equation}

Noticing that
\begin{equation}
\Vert \bar{\mathbf{x}}-\mathbf{x}^*\Vert\Vert \bar{\mathbf{v}}_s\Vert\leq \frac{1}{2}||\bar{\mathbf{x}}-\mathbf{x}^*||^2+\frac{1}{2}||\bar{\mathbf{v}}_s||^2,
\end{equation}
and
\begin{equation}
\begin{aligned}
&k_2\max_{i\in \mathbb{N}}\{\bar{l}_i\}(2N||\mathbf{P}||+1)||\mathbf{y}-\mathbf{1}_N\otimes \bar{\mathbf{x}}||||\bar{\mathbf{x}}-\mathbf{x}^*||\\
&\leq \frac{(2N\max_{i\in\mathbb{N}}\{\bar{l}_i\}||\mathbf{P}||+\max_{i\in \mathbb{N}}\{\bar{l}_i\})^2}{2}||\bar{\mathbf{x}}-\mathbf{x}^*||^2\\
&+\frac{k_2^2}{2}||\mathbf{y}-\mathbf{1}_N\otimes \bar{\mathbf{x}}||^2.
\end{aligned}
\end{equation}
 In addition,
\begin{equation}
\begin{aligned}
&(2\sqrt{N}||\mathbf{P}||+k_2k_3b)||\mathbf{y}-\mathbf{1}_N\otimes \bar{\mathbf{x}}||||\bar{\mathbf{v}}_s||\\
\leq &\left(\sqrt{N}||\mathbf{P}||+\frac{k_3^2b}{2}\right)||\bar{\mathbf{v}}_s||^2 \\ &+\left(\sqrt{N}||\mathbf{P}||+\frac{k_2^2b}{2}\right)||\mathbf{y}-\mathbf{1}_N\otimes \bar{\mathbf{x}}||^2.
\end{aligned}
\end{equation}

Furthermore,
\begin{equation}
\begin{aligned}
&k_2\sqrt{N}\max_{i\in\mathbb{N}}\{\bar{l}_i\}||\mathbf{x}_f-\mathbf{z}_f||||\bar{\mathbf{x}}-\mathbf{x}^*||\\
\leq & \frac{\sqrt{N}\max_{i\in\mathbb{N}}\{\bar{l}_i\}}{2}||\bar{\mathbf{x}}-\mathbf{x}^*||^2\\
&+\frac{\sqrt{N}\max_{i\in\mathbb{N}}\{\bar{l}_i\}k_2^2}{2}||\mathbf{x}_f-\mathbf{z}_f||^2,
\end{aligned}
\end{equation}
and
\begin{equation}
\begin{aligned}
&k_2\sqrt{N}\max_{i\in\mathbb{N}}\{\bar{l}_i\}||\mathbf{x}_f-\mathbf{z}_f||||\mathbf{y}-\mathbf{1}_N\otimes\bar{\mathbf{x}}||\\
\leq &\frac{\sqrt{N}\max_{i\in\mathbb{N}}\{\bar{l}_i\}}{2}||\mathbf{y}-\mathbf{1}_N\otimes \bar{\mathbf{x}}||^2\\
&+\frac{\sqrt{N}\max_{i\in\mathbb{N}}\{\bar{l}_i\}k_2^2}{2}||\mathbf{x}_f-\mathbf{z}_f||^2.
\end{aligned}
\end{equation}

Hence,
\begin{equation}
\begin{aligned}
\dot{V}\leq &-\bar{\Phi}_1||\bar{\mathbf{x}}-\mathbf{x}^*||^2\\
&-(k_4-\frac{1}{2}-\sqrt{N}||\mathbf{P}||-\frac{k_3^2b}{2})||\bar{\mathbf{v}}_s||^2\\
&-\left(k_1-\frac{\sqrt{N}\max_{i\in\mathbb{N}}\{\bar{l}_i\}k_2^2}{2}\right.\\
&\left.-\frac{k_2^2\sqrt{N}\max_{i\in\mathbb{N}}\{\bar{l}_i\}}{2}\right)||\mathbf{x}_f-\mathbf{z}_f||^2\\
&-\left(k_3\lambda_{min}(\mathbf{Q})-2k_2\sqrt{N}\max_{i\in\mathbb{N}}\{\bar{l}_i\}||\mathbf{P}||-\sqrt{N}||\mathbf{P}||\right.\\
&\left.-\frac{k_2^2}{2}-\frac{k_2^2b}{2}-\frac{\sqrt{N}\max_{i\in\mathbb{N}}\{\bar{l}_i\}}{2}\right)||\mathbf{y}-\mathbf{1}_N\otimes \bar{\mathbf{x}}||^2\\
&-\sum_{i=1}^N {\tilde{W}_i}^T\tilde{W}_i+(\mathbf{x}_f-\mathbf{z}_f)^T[g_i(\mathbf{x})-g_i(\mathbf{y}_i)]_{\mathbb{N}_f}\\
&+\bar{\mathbf{v}}_s^T[g_i(\mathbf{x})-g_i(\mathbf{y}_i)]_{\mathbb{N}_s}+N\epsilon+4NW_{\text{max}}.
\end{aligned}
\end{equation}
where $\bar{\Phi}_1=k_2m-\frac{1}{2}-\frac{(2N\max_{i\in\mathbb{N}}\{\bar{l}_i\}||\mathbf{P}||+\max_{i\in \mathbb{N}}\{\bar{l}_i\})^2}{2}-\frac{\sqrt{N}\max_{i\in\mathbb{N}}\{\bar{l}_i\}}{2}.$
Furthermore,
\begin{equation}
\begin{aligned}
&(\mathbf{x}_f-\mathbf{z}_f)^T[g_i(\mathbf{x})-g_i(\mathbf{y}_i)]_{\mathbb{N}_f}\\
\leq &\sqrt{n}\max_{i\in \mathbb{N}_f}\{\eta_i\}||\mathbf{x}_f-\mathbf{z}_f||^2\\
&+\max_{i\in \mathbb{N}_f}\{\eta_i\}||\mathbf{x}_f-\mathbf{z}_f||||\mathbf{y}-\mathbf{1}_N\otimes \bar{\mathbf{x}}||,
\end{aligned}
\end{equation}
and similarly,
\begin{equation}
\begin{aligned}
&\bar{\mathbf{v}}_s^T[g_i(\mathbf{x})-g_i(\mathbf{y}_i)]_{\mathbb{N}_s}\\
\leq & \sqrt{N-n}\max_{i\in\mathbb{N}_s}\{\eta_i\}||\bar{\mathbf{v}}_s||||\mathbf{x}_f-\mathbf{z}_f||\\
&+\max_{i\in \mathbb{N}_s}\{\eta_i\} ||\bar{\mathbf{v}}_s||||\mathbf{y}-\mathbf{1}_N\otimes \bar{\mathbf{x}}||.
\end{aligned}
\end{equation}

Let $\bar{\Phi}_2=k_4-\frac{1}{2}-\sqrt{N}||\mathbf{P}||-\frac{k_3^2b}{2}-\frac{\sqrt{N-n}\max_{i\in\mathbb{N}_s}\{\eta_i\}}{2}-\frac{\max_{i\in \mathbb{N}_s}\{\eta_i\}}{2}$, $\bar{\Phi}_3=k_1-\frac{\sqrt{N}\max_{i\in\mathbb{N}}\{\bar{l}_i\}k_2^2}{2}-\frac{k_2^2\sqrt{N}\max_{i\in\mathbb{N}}\{\bar{l}_i\}}{2}-\sqrt{n}\max_{i\in \mathbb{N}_f}\{\eta_i\}-\frac{\max_{i\in \mathbb{N}_f}\{\eta_i\}}{2}-\frac{\sqrt{N-n}\max_{i\in\mathbb{N}_s}\{\eta_i\}}{2},$ and $\bar{\Phi}_4=k_3\lambda_{min}(\mathbf{Q})-2k_2\sqrt{N}\max_{i\in\mathbb{N}}\{\bar{l}_i\}||\mathbf{P}||-\frac{k_2^2}{2}-\sqrt{N}||\mathbf{P}||-\frac{k_2^2b}{2}-\frac{\sqrt{N}\max_{i\in\mathbb{N}}\{\bar{l}_i\}}{2}-\frac{\max_{i\in \mathbb{N}_f}\{\eta_i\}}{2}-\frac{\max_{i\in \mathbb{N}_s}\{\eta_i\}}{2},$ then,
\begin{equation}
\dot{V}\leq -KV+N\epsilon+4NW_{\text{max}},
\end{equation}
where $K=\min\{2\bar{\Phi}_1,2\bar{\Phi}_2,2\bar{\Phi}_3,\frac{\bar{\Phi}_4}{\lambda_{max}(\mathbf{P})},2\beta\}.$

Hence, by Lemma \ref{lemma2},
\begin{equation}
V(t)\leq V(0)e^{-Kt}+\frac{N\epsilon+4NW_{\text{max}}}{K},
\end{equation}
where $K$ can be arbitrarily large by the following tuning rule: choose $k_2$ to be large enough so that $\bar{\Phi}_1$ is sufficiently large. Then, for fixed $k_2,$ choose $k_1,k_3$ such that $\bar{\Phi}_3$ and $\bar{\Phi}_4$
are sufficiently large. Then, for fixed $k_3$, choose $k_4$ to be large enough so that $\bar{\Phi}_2$ is sufficiently large. If this is the case, $K$ is sufficiently large with sufficiently large $\beta$, indicating that $V(t)$ is decaying to be arbitrarily close to  zero. Recalling the definitions of the Lyapunov candidate function and $\bar{\mathbf{v}}_s$, the conclusion can be obtained.
\end{Proof}

\begin{Remark}
As $\Lambda$ and $\Xi$ can be any positive constants, Theorem \ref{main} indicates that for any bounded initialization, the reported method \eqref{eqww11}-\eqref{eqww13} can drive $\mathbf{x}(t)$ to an arbitrary small neighborhood of $\mathbf{x}^*$. The main content of this paper focuses on distributed Nash equilibrium seeking for games involving mixed-order players. Note that when all players are first-order integrators, i.e., $n=N$ (second-order integrators, i.e., $n=0$), Theorem \ref{main} illustrates that the method in \eqref{contro1}-\eqref{eqd} (\eqref{control2}-\eqref{eqre}) steers players' actions to  an arbitrarily small neighborhood of $\mathbf{x}^*$ as well. \textbf{Therefore, the presented analysis actually provides a unified viewpoint for the analysis of both first- and second-order players.}
\end{Remark}

If there exist no unknown nonlinear and disturbance modulations (i.e., $g_i(\mathbf{x})+d_i(t)$) in the players' dynamics, the corresponding estimation module can be removed from the proposed algorithm. If this is the case, we get that for first-order players,
\begin{equation}\label{eqww1tt}
\begin{aligned}
\dot{\mathbf{x}}_f=&-k_1(\mathbf{x}_f-\mathbf{z}_f),\\
\dot{\mathbf{z}}_f=&-k_2[\nabla_i f_i(\mathbf{y}_i)]_{\mathbb{N}_f},
\end{aligned}
\end{equation}
and for second-order players,
\begin{equation}\label{eqww2tt}
\begin{aligned}
\dot{\mathbf{x}}_s=&\mathbf{v}_s,\\
\dot{\mathbf{v}}_s=&-k_4\mathbf{v}_s-k_2k_4[\nabla_i f_i(\mathbf{y}_i)]_{\mathbb{N}_s},
\end{aligned}
\end{equation}
with
\begin{equation}\label{eqww3tt}
\dot{\mathbf{y}}=-k_3(\mathcal{L}\otimes \mathbf{I}_{N\times N}+\mathcal{A}_0)(\mathbf{y}-\mathbf{1}_N\otimes \bar{\mathbf{x}}),
\end{equation}
where the definitions for the variables and gains follow those in \eqref{contro1}-\eqref{eqre}. In this case, the subsequent result can be derived.

\begin{Theorem}\label{main1}
Under Assumptions \ref{Assu_1}-\ref{ass5}, there exists a positive constant $k_2^*$ so that for $k_2
>k_2^*$, there exist positive constants $k_1^*$ and $k_3^*$ so that for $k_1>k_1^*(k_2),k_3>k_3^*(k_2)$, there exists a positive constant $k_4^*(k_2,k_3)$  so that for $k_4>k_4^*,$ the Nash equilibrium $\mathbf{x}^*$ is globally exponentially stable with the strategy in \eqref{eqww1tt}-\eqref{eqww3tt}.
\end{Theorem}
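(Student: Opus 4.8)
The plan is to treat Theorem \ref{main1} as the \emph{exact} (rather than practical) stability counterpart of Theorem \ref{main}, exploiting that the closed loop \eqref{eqww1tt}--\eqref{eqww3tt} is obtained from \eqref{eqww11}--\eqref{eqww13} simply by deleting every term that originates from the unknown dynamics and disturbances, namely the neural-network terms $[\tilde{W}_i^T S_i(\mathbf{y}_i)]$, the robustifying terms $[\phi_i]$, and the residuals $[g_i(\mathbf{x})-g_i(\mathbf{y}_i)]$ and $[d_i(t)+\varepsilon_i]$. First I would adopt the same Lyapunov candidate used in Lemma \ref{lemma5}, i.e.\ $V=\sum_{i=1}^4 V_i$ with $V_1,\ldots,V_4$ as defined there, discarding the weight-error term $V_5$ since no adaptive law is present anymore. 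The coordinate change $\bar{\mathbf{v}}_s=\mathbf{v}_s+k_2[\nabla_i f_i(\mathbf{y}_i)]_{\mathbb{N}_s}$ is retained, so that $\dot{\bar{\mathbf{x}}}$ takes the clean gradient-plus-velocity form already exploited in the earlier analysis.

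Next I would differentiate $V$ along \eqref{eqww1tt}--\eqref{eqww3tt}, reusing the bounds on $\dot V_1,\dot V_2,\dot V_3,\dot V_4$ from the proof of Theorem \ref{main}; the crucial simplification is that, with $g_i\equiv 0$ and $d_i\equiv 0$, the additive constants $N\epsilon$ and $4NW_{\text{max}}$ drop out and all cross terms carrying the Lipschitz constants $\eta_i$ vanish. Collecting the remaining quadratic terms and applying the same completing-the-square bounds as before yields
\begin{equation}
\dot{V}\leq -\bar{\Phi}_1\Vert\bar{\mathbf{x}}-\mathbf{x}^*\Vert^2-\bar{\Phi}_2\Vert\bar{\mathbf{v}}_s\Vert^2-\bar{\Phi}_3\Vert\mathbf{x}_f-\mathbf{z}_f\Vert^2-\bar{\Phi}_4\Vert\mathbf{y}-\mathbf{1}_N\otimes\bar{\mathbf{x}}\Vert^2,
\end{equation}
where $\bar{\Phi}_1,\ldots,\bar{\Phi}_4$ are the coefficients of Theorem \ref{main} with every $\eta_i$ set to zero. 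The same sequential tuning rule --- enlarge $k_2$ to make $\bar{\Phi}_1>0$, then $k_1,k_3$ for $\bar{\Phi}_3,\bar{\Phi}_4>0$, and finally $k_4$ for $\bar{\Phi}_2>0$ --- renders all four coefficients positive, whence $\dot{V}\leq -KV$ with $K=\min\{2\bar{\Phi}_1,2\bar{\Phi}_2,2\bar{\Phi}_3,\bar{\Phi}_4/\lambda_{max}(\mathbf{P})\}>0$.

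Finally, since $V$ is a positive-definite quadratic form in the error coordinates and the bound now carries \emph{no} additive remainder, $\dot V\leq -KV$ integrates to $V(t)\leq V(0)e^{-Kt}$ for all $t\geq 0$. Crucially, the estimate is global: unlike in Theorem \ref{main}, there is no RBFNN whose validity is confined to a compact set, so Lemma \ref{lemma5} is not needed to pre-confine $\mathbf{y}_i$ and the bounds of Assumptions \ref{Assu_1}--\ref{ass5} hold on all of $\mathbb{R}^N$. To recover the claim for the physical state I would translate this decay through the coordinates: $V_4\to 0$ forces $\mathbf{y}\to\mathbf{1}_N\otimes\bar{\mathbf{x}}$, $V_1\to 0$ forces $\bar{\mathbf{x}}\to\mathbf{x}^*$, and $V_3\to 0$ forces $\mathbf{x}_f\to\mathbf{z}_f=\bar{\mathbf{x}}_f\to\mathbf{x}_f^*$; for the second-order block $\bar{\mathbf{x}}_s=\mathbf{x}_s$ yields $\mathbf{x}_s\to\mathbf{x}_s^*$, while $V_2\to 0$ together with $\nabla_i f_i(\mathbf{x}^*)=0$ yields $\mathbf{v}_s\to 0$, each inheriting the rate $K$. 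The one point deserving care --- as opposed to the routine re-bookkeeping of coefficients --- is confirming that exponential decay of the transformed Lyapunov state implies exponential decay of $\Vert\mathbf{x}(t)-\mathbf{x}^*\Vert$ itself; this holds because the change of variables $(\bar{\mathbf{x}},\bar{\mathbf{v}}_s)\leftrightarrow(\mathbf{x},\mathbf{v}_s)$ is affine with bounded, gain-dependent coefficients, so no loss of the exponential rate occurs.
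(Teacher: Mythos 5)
Your proposal is correct and follows essentially the same route as the paper: the paper's proof of Theorem \ref{main1} likewise takes $V=V_1+V_2+V_3+V_4$ (dropping the weight-error term), reuses the $\dot V_i$ bounds from the earlier analysis with all NN/disturbance terms absent, defines coefficients $\bar{\Phi}_1,\dots,\bar{\Phi}_4$ made positive by the identical sequential tuning of $k_2$, then $k_1,k_3$, then $k_4$, and concludes $\dot V\leq-\min_j\{\bar{\Phi}_j\}\Vert\mathbf{E}\Vert^2$ with global exponential stability following from the quadratic form of $V$. Your closing remarks on translating the decay of $(\bar{\mathbf{x}},\bar{\mathbf{v}}_s,\mathbf{x}_f-\mathbf{z}_f,\mathbf{y}-\mathbf{1}_N\otimes\bar{\mathbf{x}})$ back to $\Vert\mathbf{x}(t)-\mathbf{x}^*\Vert$ via the Lipschitz/affine change of variables are a slightly more explicit rendering of the paper's terse ``recalling the definition of $V$, the conclusion is drawn.''
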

\begin{proof}
Define
\begin{equation}
\begin{aligned}
V=&\frac{1}{2}( \bar{\mathbf{x}}-\mathbf{x}^*)^T( \bar{\mathbf{x}}-\mathbf{x}^*)+\frac{1}{2}\mathbf{\bar{v}} ^T_s\mathbf{\bar{v}}_s\\
&+\frac{1}{2}(\mathbf{x}_f-\mathbf{z}_f)^T(\mathbf{x}_f-\mathbf{z}_f)\\
&+(\mathbf{y}-\mathbf{1}_N\otimes \bar{\mathbf{x}})^T\mathbf{P} (\mathbf{y}-\mathbf{1}_N\otimes \bar{\mathbf{x}}).
\end{aligned}
\end{equation}

Then, following the proof of Theorem \ref{main},
\begin{equation}
\begin{aligned}
\dot{V}\leq&-k_2m\Vert \bar{\mathbf{x}}-\mathbf{x}^*\Vert^2+\Vert \bar{\mathbf{x}}-\mathbf{x}^*\Vert\Vert \bar{\mathbf{v}}_s\Vert\\
&+k_2\max_{i\in\mathbb{N}}\{\bar{l}_i\}\Vert \bar{\mathbf{x}}-\mathbf{x}^*\Vert\Vert \mathbf{y}-\mathbf{1}_N\otimes \bar{\mathbf{x}}\Vert\\
&-k_4\Vert \bar{\mathbf{v}}_s\Vert^2+k_2k_3b\Vert \mathbf{y}-\mathbf{1}_N\otimes \bar{\mathbf{x}}\Vert\Vert \bar{\mathbf{v}}_s\Vert\\
&-k_1\Vert \mathbf{x}_f-\mathbf{z}_f\Vert^2\\
&+k_2\sqrt{N}\max_{i\in\mathbb{N}}\{\bar{l}_i\}\Vert \mathbf{x}_f-\mathbf{z}_f\Vert\Vert \bar{\mathbf{x}}-\mathbf{x}^*\Vert\\
&+k_2\max_{i\in \mathbb{N}}\{\bar{l}_i\}\Vert \mathbf{x}_f-\mathbf{z}_f\Vert\Vert \mathbf{y}-\mathbf{1}_N\otimes \bar{\mathbf{x}}\Vert\\
&-k_3\lambda_{min}(\mathbf{Q}){\lVert \mathbf{y}-\mathbf{1}_N \otimes \bar{\mathbf{x}} \rVert}^2\\
&+2k_2\sqrt{N}\max_{i\in\mathbb{N}}\{\bar{l}_i\}\lVert\mathbf{P }\rVert {\lVert \mathbf{y}-\mathbf{1}_N\otimes \bar{\mathbf{x}} \rVert}^2\\
&+2k_2N\max_{i\in\mathbb{N}}\{\bar{l}_i\}\lVert\mathbf{P }\rVert \lVert \mathbf{y}-\mathbf{1}_N\otimes \bar{\mathbf{x}} \rVert\lVert \bar{\mathbf{x}}-\mathbf{x}^* \rVert\\
&+2\sqrt{N}\lVert\mathbf{P }\rVert\lVert \mathbf{y}-\mathbf{1}_N\otimes \bar{\mathbf{x}} \rVert\lVert \bar{\mathbf{v}}_s \rVert.
\end{aligned}
\end{equation}

Let $\bar{\Phi}_1=k_2m-\frac{5}{2},$ $\bar{\Phi}_2=k_4-\frac{1}{2}-\frac{(k_2k_3b)^2}{2}-\sqrt{N}||\mathbf{P}||,$ $\bar{\Phi}_3=k_3\lambda_{min}(\mathbf{Q})-\frac{(k_2\max_{i\in\mathbb{N}}\{\bar{l}_i\})^2}{2}-\frac{1}{2}-\frac{\max_{i\in\mathbb{N}}\{\bar{l}_i\}k_2}{2}-2k_2\sqrt{N}\max_{i\in\mathbb{N}}\{\bar{l}_i\}\lVert\mathbf{P }\rVert-(k_2N\max_{i\in\mathbb{N}}\{\bar{l}_i\}\lVert\mathbf{P }\rVert)^2-\sqrt{N}\lVert\mathbf{P }\rVert,$ and
$\bar{\Phi}_4=k_1-\frac{(k_2\max_{i\in\mathbb{N}}\{\bar{l}_i\})^2N}{2}-\frac{k_2\max_{i\in\mathbb{N}}\{\bar{l}_i\}}{2}$.

Then, by choosing $k_2>\frac{5}{2m},$ we get that $\bar{\Phi}_1>0$ and then, for fixed $k_2, $ we can choose $k_1$ and $k_3$   to be sufficiently large such that $\bar{\Phi}_3>0,\bar{\Phi}_4>0$.  Moreover, for fixed $k_2,k_3$, we can choose $k_4$ such that $\bar{\Phi}_2>0.$ By such a tuning rule, we get that
\begin{equation}
\dot{V}\leq -\min\{\bar{\Phi}_1,\bar{\Phi}_2,\bar{\Phi}_3,\bar{\Phi}_4\}||\mathbf{E}||^2,
\end{equation}
in which $\mathbf{E}=[( \bar{\mathbf{x}}-\mathbf{x}^*)^T,\mathbf{\bar{v}} ^T_s,(\mathbf{x}_f-\mathbf{z}_f)^T,(\mathbf{y}-\mathbf{1}_N\otimes \bar{\mathbf{x}})^T]^T$. Recalling the definition of $V$, the conclusion is drawn.
\end{proof}

Compared with Theorem  \ref{main}, it can be seen that Theorem \ref{main1} improves the semi-global results in Theorem  \ref{main} to global versions without unknown dynamics and disturbances. In addition, Assumption \ref{ass4} can be further relaxed in this case and the corresponding result is stated below.

 \begin{Corollary}\label{main3}
Assume that Assumptions \ref{Assu_1}-\ref{Assu_2} and \ref{ass5} hold and $\nabla_{ij}f_i(\mathbf{x})$ for $i\in\mathbb{N},j\in \mathbb{N}_s$ are bounded if $\mathbf{x}$ is bounded. Then, for any bounded initial condition, there exists a positive constant $k_2^*$ so that for $k_2
>k_2^*$, there exist positive constants $k_1^*$ and $k_3^*$  so that for $k_1>k_1^*(k_2),k_3>k_3^*(k_2)$, there exists a positive constant $k_4^*(k_2,k_3)$  so that for $k_4>k_4^*,$ $\mathbf{x}(t)$ exponentially converges to $\mathbf{x}^*$ under \eqref{eqww1tt}-\eqref{eqww3tt}.
\end{Corollary}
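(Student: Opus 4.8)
The plan is to mirror the proof of Theorem~\ref{main1} almost verbatim, replacing the single place where Assumption~\ref{ass4} was used---the global bound on the Hessian cross-terms $\nabla_{ij}^2 f_i$ entering the matrix $H_1$---by a bound that holds only on a sublevel set of the Lyapunov function, and then closing the argument by an invariance (bootstrapping) step. The crucial observation is that in the disturbance-free system \eqref{eqww1tt}--\eqref{eqww3tt} the Hessian enters the analysis \emph{only} through the coordinate change $\bar{\mathbf{v}}_s=\mathbf{v}_s+k_2[\nabla_i f_i(\mathbf{y}_i)]_{\mathbb{N}_s}$, which produces the term $k_2\bar{\mathbf{v}}_s^T H_1[\dot{\mathbf{y}}_i]_{\mathbb{N}_s}$ in $\dot{V}_2$ and is absorbed through the constant $b=\sup\|H_1\|\,\|\mathcal{L}\otimes\mathbf{I}_{N\times N}+\mathcal{A}_0\|$. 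Hence weakening Assumption~\ref{ass4} to local boundedness affects nothing except how $b$ is produced.

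First I would take the same Lyapunov function $V$ as in Theorem~\ref{main1} and write $\mathbf{E}=[(\bar{\mathbf{x}}-\mathbf{x}^*)^T,\bar{\mathbf{v}}_s^T,(\mathbf{x}_f-\mathbf{z}_f)^T,(\mathbf{y}-\mathbf{1}_N\otimes\bar{\mathbf{x}})^T]^T$, noting the sandwich bound $\min\{\lambda_{min}(\mathbf{P}),\tfrac12\}\|\mathbf{E}\|^2\leq V\leq\max\{\lambda_{max}(\mathbf{P}),\tfrac12\}\|\mathbf{E}\|^2$. For a bounded initial condition, let $\Lambda$ bound $\|\mathbf{E}(0)\|$ and set $c=\max\{\lambda_{max}(\mathbf{P}),\tfrac12\}\Lambda^2\geq V(0)$. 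The sublevel set $\Omega_c=\{\mathbf{E}:V(\mathbf{E})\leq c\}$ is compact and, crucially, is defined purely through the physical error variables and the fixed matrix $\mathbf{P}$, so it does \emph{not} depend on the control gains. On $\Omega_c$ both $\|\mathbf{y}-\mathbf{1}_N\otimes\bar{\mathbf{x}}\|$ and $\|\bar{\mathbf{x}}-\mathbf{x}^*\|$ are bounded, so each $\mathbf{y}_i$ lies in a fixed compact set; by the assumed local boundedness of the relevant Hessian entries this yields a finite $b_c=\sup_{\Omega_c}\|H_1\|\,\|\mathcal{L}\otimes\mathbf{I}_{N\times N}+\mathcal{A}_0\|$ playing the role of $b$.

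With $b_c$ fixed I would reproduce the estimates for $\dot{V}_1,\dots,\dot{V}_4$ exactly as in Theorem~\ref{main1}, obtaining the same $\bar{\Phi}_1,\dots,\bar{\Phi}_4$ with $b$ replaced by $b_c$, and then apply the identical tuning order: choose $k_2>\tfrac{5}{2m}$ so $\bar{\Phi}_1>0$, then $k_1,k_3$ large so $\bar{\Phi}_3,\bar{\Phi}_4>0$, and finally $k_4$ large so $\bar{\Phi}_2>0$. This gives $\dot{V}\leq-\kappa\|\mathbf{E}\|^2\leq0$ with $\kappa=\min\{\bar{\Phi}_1,\bar{\Phi}_2,\bar{\Phi}_3,\bar{\Phi}_4\}>0$, valid \emph{on} $\Omega_c$. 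The invariance step closes the loop: since $\dot{V}\leq0$ wherever the bound applies and $V(0)\leq c$, the trajectory can never leave $\Omega_c$, so $b_c$ stays valid for all $t\geq0$; consequently $\dot{V}\leq-\kappa\|\mathbf{E}\|^2$ holds for all $t$, giving exponential decay of $V$ and hence exponential convergence of $\mathbf{x}(t)$ to $\mathbf{x}^*$.

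The main obstacle is the apparent circularity: establishing boundedness of the trajectory seems to require the Hessian bound, while the Hessian bound requires boundedness. It is resolved by the fact that $c$, $\Omega_c$, and $b_c$ depend only on the initialization size $\Lambda$ and the graph, never on the gains---so one may fix $b_c$ from $\Lambda$ first and only then select the gains, which keeps the quantifier ordering of the statement (``for any bounded initial condition, there exist gains $\dots$'') consistent and non-circular. This dependence of the gains on the initialization is precisely why the conclusion is stated for bounded initial conditions rather than as the uniform global statement of Theorem~\ref{main1}.
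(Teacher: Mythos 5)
Your approach is the right one and is essentially the argument the paper leaves implicit: Corollary \ref{main3} is stated without a written proof, accompanied only by the remark that dropping Assumption \ref{ass4} degrades Theorem \ref{main1} to a semi-global statement, and your sublevel-set-plus-invariance bootstrap supplies exactly the missing step. You also correctly identify that the Hessian enters the analysis only through $H_1$ in the constant $b$ multiplying the cross term $k_2\bar{\mathbf{v}}_s^TH_1[\dot{\mathbf{y}}_i]_{\mathbb{N}_s}$, so localizing $b$ to a compact invariant set is all that is required.

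One claim needs repair: the sublevel set $\Omega_c$ is \emph{not} independent of the control gains. The error vector $\mathbf{E}$ contains $\bar{\mathbf{v}}_s$, and $\bar{\mathbf{v}}_s(0)=\mathbf{v}_s(0)+k_2[\nabla_i f_i(\mathbf{y}_i(0))]_{\mathbb{N}_s}$ depends on $k_2$; since the corollary's initial condition is naturally posed in the physical coordinates $\mathbf{x}(0),\mathbf{v}_s(0),\mathbf{z}_f(0),\mathbf{y}(0)$, the quantities $V(0)$, $c$, $\Omega_c$ and $b_c$ all inherit a dependence on $k_2$. Consequently your stated resolution of the circularity (``fix $b_c$ from $\Lambda$ first and only then select the gains'') does not quite go through as written. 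The correct resolution is that $b$ appears only in $\bar{\Phi}_2$, which is made positive by $k_4$, the last gain to be chosen: fix $k_2$ from $\bar{\Phi}_1$ (which involves neither $b$ nor the initial data); this determines $c(k_2,\Lambda)$, $\Omega_c$ and $b_c(k_2,\Lambda)$; choose $k_1,k_3$ from $\bar{\Phi}_3,\bar{\Phi}_4$ (also $b$-free); and finally take $k_4$ large enough to absorb $\tfrac{1}{2}(k_2k_3b_c)^2$ in $\bar{\Phi}_2$. This preserves the quantifier order of the statement, and the resulting dependence of $k_4^*$ on the initialization is precisely the substance of the ``semi-global'' qualification. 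With that adjustment your proof is complete.
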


Compared with Theorem \ref{main1}, Corollary  \ref{main3} illustrates that if Assumption 4 is not satisfied, the corresponding result is degraded to a semi-global counterpart by supposing that the initial values of the variables are bounded.

\section{Numerical Verification}\label{simulat}
This section offers numerical verification of the reported methods by a connectivity control game involving $5$ vehicles concerned in  \cite{26}. In the game,  the cost function of vehicle $i$ is
\begin{equation}
f_i(\mathbf{x})=h_i(x_i)+l_i(\mathbf{x}),
\end{equation}
where $x_i=[x_{i1},x_{i2}]^T\in \mathbb{R}^2$ and
\begin{equation}
h_i(x_i)=x^T_im_{ii}x_i+x^T_im_i+i,
\end{equation}
in which $m_{ii}=\begin{bmatrix}
                                                       i & 0 \\
                                                       0 & i \\
                                                     \end{bmatrix},m_i=[i,i]^T.
$ Moreover,
$l_1(\mathbf{x})={\lVert x_1-x_2\lVert}^2$, $l_2(\mathbf{x})={\lVert x_2-x_3\lVert}^2,$ $l_3(\mathbf{x})={\lVert x_3-x_2\lVert}^2,$
$l_4(\mathbf{x})={\lVert x_4-x_2\lVert}^2+{\lVert x_4-x_5\lVert}^2$
and $l_5(\mathbf{x})={\lVert x_5-x_1\lVert}^2$. In the presented example, $x^*_{i}=[-\frac{1}{2},-\frac{1}{2}]^T$ for $i\in \{1,2,3,4,5\}$ \cite{26}. In the upcoming simulations, it is assumed that vehicles $1$-$3$ are first-order integrators and vehicles $4$-$5$ are second-order integrators.

To be more specific, for $i\in\{1,2,3\},$
\begin{equation}
\dot{x}_i=u_i+g_i(\mathbf{x})+d_i(t),
\end{equation}
in which
$g_1(\mathbf{x})+d_1(t)=[x_{21}+\text{sin}(t), x_{22}+\text{sin}(t)]^T$, $g_2(\mathbf{x})+d_2(t)=[x_{21}^2+x_{31}+2\text{sin}(2t),x_{22}+2\text{sin}(2t)]^T$, $g_3(\mathbf{x})+d_3(t)=[3x_{31}+3\text{sin}(3t), 3x_{32}+3\text{sin}(3t)]^T$. In addition, for $i\in\{4,5\},$
\begin{equation}
\begin{aligned}
\dot{x}_i=&v_i,\\
\dot{v}_i=&u_i+g_i(\mathbf{x})+d_i(t),
\end{aligned}
\end{equation}
where  $g_4(\mathbf{x})+d_4(t)=[4x_{41}+4\text{sin}(4t),4x_{42}+4\text{sin}(4t)]^T$ and $g_5(\mathbf{x})+d_5(t)=[5x_{51}+5\text{sin}(5t),5x_{52}+5\text{sin}(5t)]^T.$

\begin{figure}
  \centering
  \includegraphics[scale=0.25]{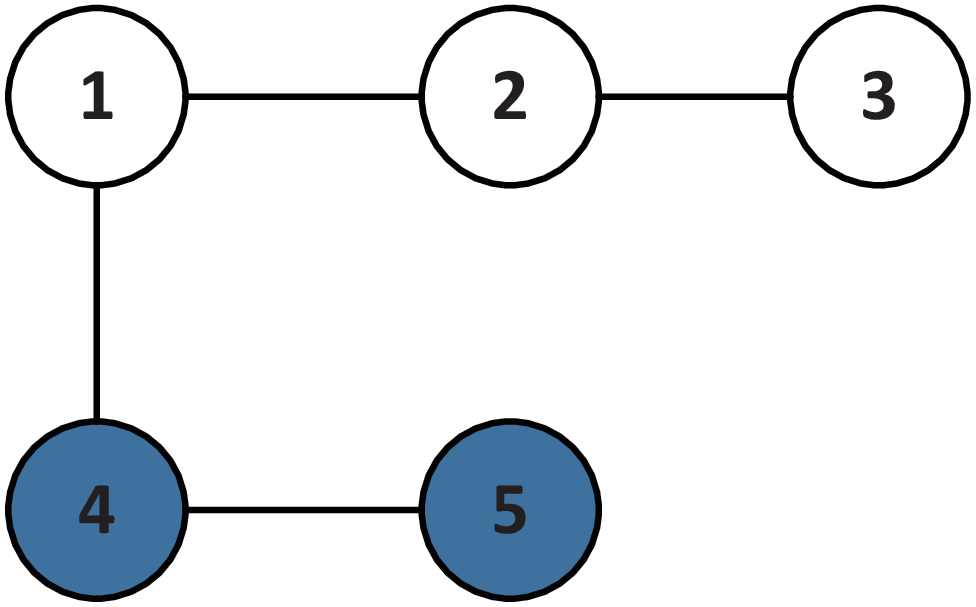}\\
  \caption{$\mathcal{G}$ among the vehicles.}\label{grad}
\end{figure}

In the simulation, the numbers of the neurons of the RBFNN are chosen as $11$ and the centers of RBFNN activation functions are $-2.5$, $-2$, $-1.5$, $-1$, $-0.5$, $0$, $0.5$, $1$, $1.5$, $2$, $2.5$ for all vehicles. Furthermore, the variances are all set as $5\sqrt{2}$. In addition $W_{\text{max}}=500$, $\beta=100$, $ \delta=10$, $\epsilon=0.01$ and $\hat{W}_i(0)$ is set as a zero matrix.

With $\mathbf{x}(0)=[-5,8,-4,-6,1,8,0,-8,-1,10]^T, \mathbf{v}_s(0)=[0,0,0,0]^T$, the numerical results produced by \eqref{eqww1}-\eqref{eqww3} are plotted in Figs. \ref{fig10}-\ref{fig12} by utilizing the communication graph in Fig. \ref{grad}. Fig. \ref{fig10}  plots players' actions from which it is clear that they would evolve to a small neighborhood of the Nash equilibrium. In addition, Fig. \ref{fig12} illustrates the evolution of  $\mathbf{v}_s(t),$ from which it can be seen that velocities of the second-order players would be driven to be sufficiently small. Hence, the result in Theorem \ref{main} is numerically testified.
\begin{figure}
  \centering
  \includegraphics[scale=0.5]{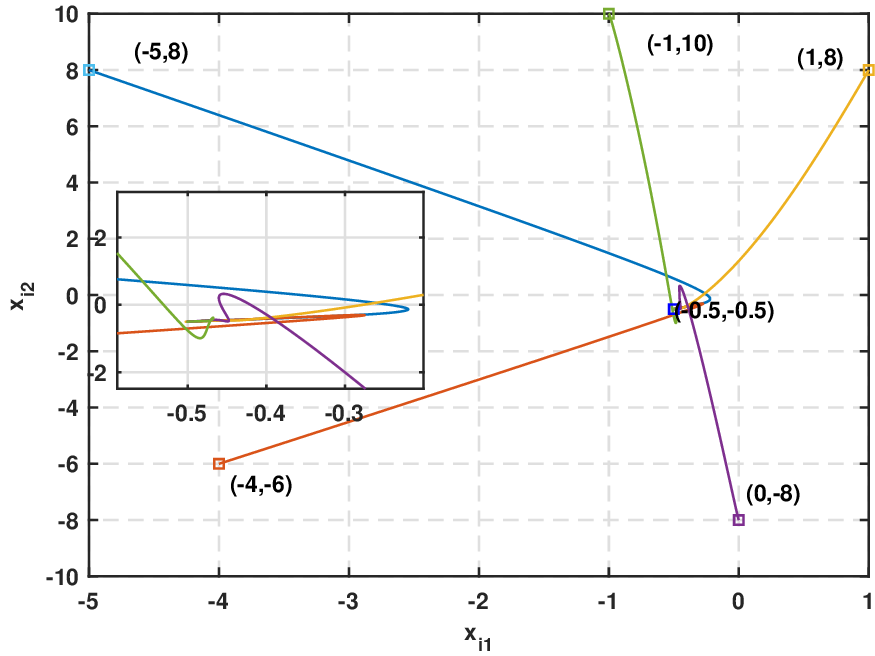}\\
  \caption{The evolutions of vehicles' positions generated by \eqref{eqww1}-\eqref{eqww3}. }\label{fig10}
\end{figure}
  \begin{figure}
  \centering
  \includegraphics[scale=0.5]{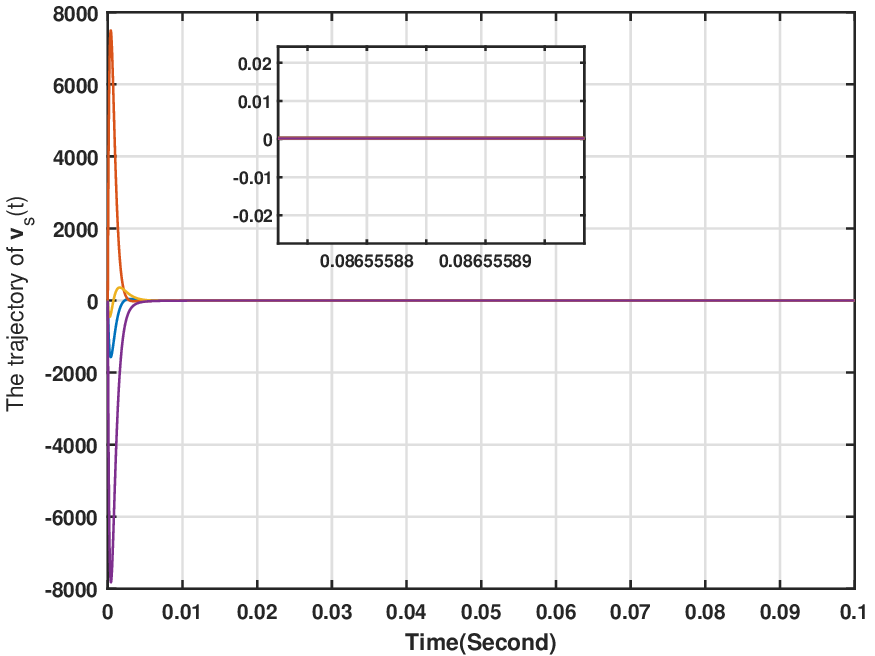}\\
  \caption{$\mathbf{v}_s(t)$ generated by \eqref{eqww1}-\eqref{eqww3}. }\label{fig12}
\end{figure}

Moreover, when there are no nonlinear dynamics and disturbances, the strategy in \eqref{eqww1tt}-\eqref{eqww3tt} is testified with the corresponding numerical results plotted in Figs. \ref{x}-\ref{v_sno_dis}.  Figs. \ref{x}-\ref{v_sno_dis} illustrate vehicles' positions and  velocities of the force-actuated vehicles, respectively. From these figures, it is seen that vehicles' positions evolve to be close to $\mathbf{x}^*$ and velocities of the second-order ones evolve to be close to zero. To this end, Theorem   \ref{main1} is testified.

  \begin{figure}
  \centering
  \includegraphics[scale=0.5]{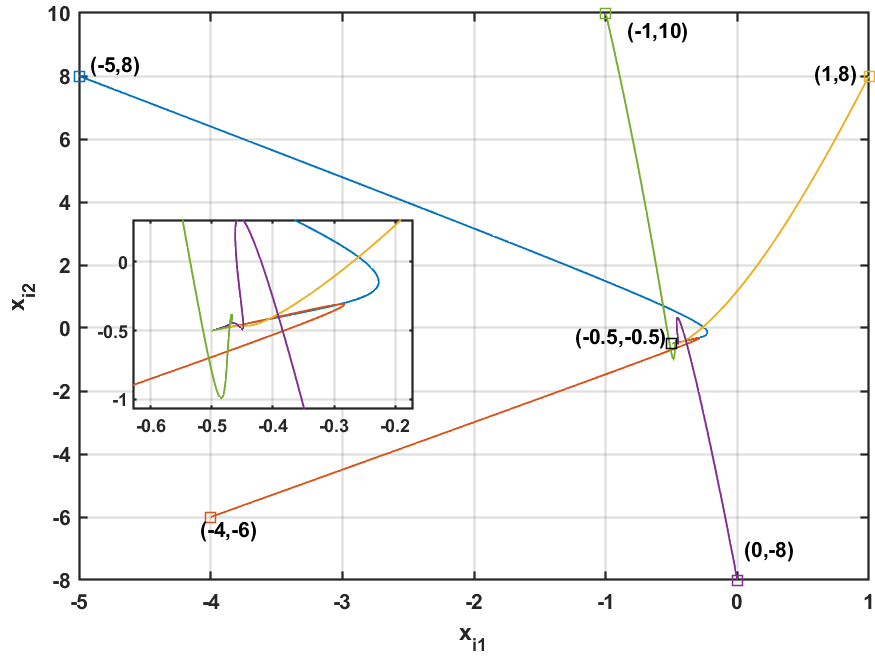}\\
  \caption{The evolutions of vehicles' positions produced by \eqref{eqww1tt}-\eqref{eqww3tt}. }\label{x}
\end{figure}

  \begin{figure}
  \centering
  \includegraphics[scale=0.5]{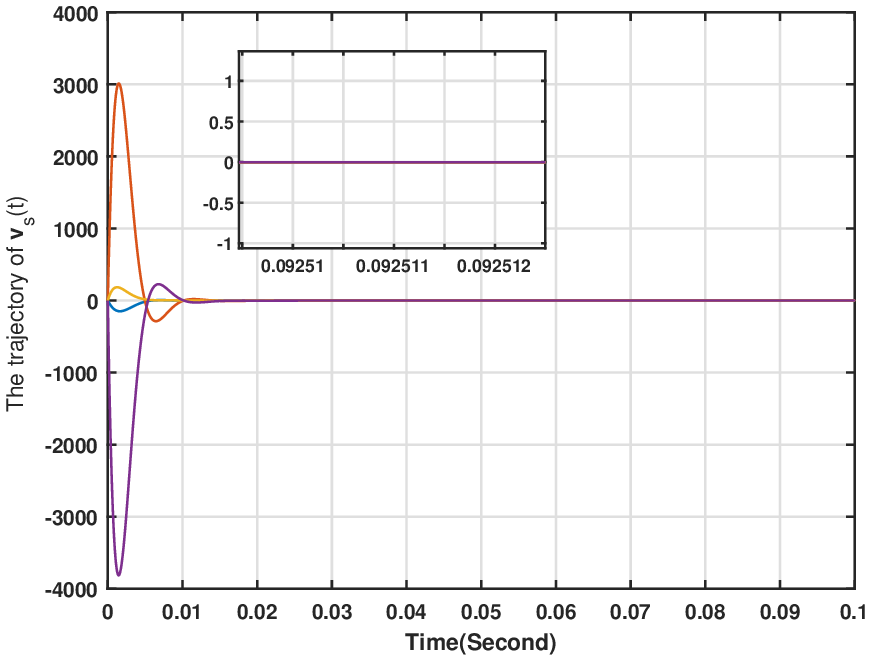}\\
  \caption{$\mathbf{v}_s(t)$ generated by \eqref{eqww1tt}-\eqref{eqww3tt}. }\label{v_sno_dis}
\end{figure}

\section{Conclusions}\label{conc}
This paper accommodates distributed Nash equilibrium seeking for mixed-order games with both first-order integrator-type participants and second-order integrator-type participants. In particular, players' dynamics are considered to be influenced by unknown but Lipschitz nonlinear dynamics and time-varying disturbances. To address unknown dynamics and achieve disturbance rejection, an adaptive neural network based approach, i.e., RBFNN, is adapted. Through suitably designing control inputs and choosing control parameters, it is proven that the reported methods are able to steer players' actions and velocities of second-order integrators to be arbitrarily close  to Nash equilibrium and zero, respectively.

\section{Appendix}
\subsection{Proof of Lemma \ref{lemma_add}}\label{lemma_add_proof}
For each $i=\mathbb{N}_f$, $\hat{W}_i$ is generated by \eqref{ad1}-\eqref{ad2}. For notational clarity, define $Y_{i}=\hat{W}_i^T\hat{W}_i$. Then, if $Y_{i}<W_{\text{max}},$ $\hat{W}_i^T(t)\hat{W}_i(t)\leq W_{\text{max}},\forall i\in\mathbb{N}$ holds. Moreover, if
$Y_{i}=W_{\text{max}}$ and $(x_i-z_i){\hat{W}_i}^TS_i(\mathbf{y}_i)<0$,
 \begin{equation*}
  \dot{Y}_{i}=2\hat{W}_i^T\dot{\hat{W}}_i=2\beta(x_i-z_i)\hat{W}_i^TS_i(\mathbf{y}_i)<0,
  \end{equation*}
  indicating that $Y_i$ deceases and hence $\hat{W}_i^T(t)\hat{W}_i(t)\leq W_{\text{max}},\forall i\in\mathbb{N}$ holds. In addition, if $Y_{i}=W_{\text{max}}$ and $(x_i-z_i){\hat{W}_i}^TS_i(\mathbf{y}_i)\geq0$,
   \begin{equation*}
  \begin{aligned}
  &\dot{Y}_{i}=2\hat{W}_i^T\dot{\hat{W}}_i\\
  &=2\beta(x_i-z_i)\hat{W}_i^TS_i(\mathbf{y}_i)-2\beta(x_i-z_i)\hat{W}_i^TS_i(\mathbf{y}_i)\\
  &=0,
  \end{aligned}
  \end{equation*}
  indicating that $\hat{W}_i^T(t)\hat{W}_i(t)\leq W_{\text{max}},\forall i\in\mathbb{N}$ holds.

  Summarizing the above cases, we get that for each $i\in\mathbb{N}_f,$ $\hat{W}_i^T(t)\hat{W}_i(t)\leq W_{\text{max}}$ holds.

  By similar arguments, it can be derived that for each $i\in\mathbb{N}_s,$  $\hat{W}_i^T(t)\hat{W}_i(t)\leq W_{\text{max}}$ holds as well, thus drawing the conclusion.
\subsection{Proof of Lemma \ref{lemma44}}\label{lemma44_proof}

For each $i\in\mathbb{N}_s$, if  $\dot{\hat{W}}_i=\beta S_i(\mathbf{y}_i)\bar{v}_i$
  \begin{equation*}
  {\tilde{W}_i}^T\left(\frac{\dot{\hat{W}}_i}{\beta}-S_i(\mathbf{y}_i)\bar{v}_i\right)=0.
  \end{equation*}
Moreover, if $\dot{\hat{W}}_i=\beta S_i(\mathbf{y}_i)\bar{v}_i-\beta\frac{\bar{v}_i{\hat{W}_i}^TS_i(\mathbf{y}_i)}{{\hat{W}_i}^T\hat{W}_i}\hat{W}_i$, we know that ${\hat{W}_i}^T\hat{W}_i=W_{\text{max}}$ and  $\bar{v}_i{\hat{W}_i}^TS_i(\mathbf{y}_i)\geq0$. If this is the case,
 \begin{equation*}
  {\tilde{W}_i}^T\left(\frac{\dot{\hat{W}}_i}{\beta}-S_i(\mathbf{y}_i)\bar{v}_i\right)=-\frac{\bar{v}_i{\hat{W}_i}^TS_i(\mathbf{y}_i)}{{\hat{W}_i}^T\hat{W}_i}(\tilde{W}_i^T\hat{W}_i),
  \end{equation*}
in which

\begin{equation*}
\begin{aligned}
\tilde{W}_i^T\hat{W}_i&=(\hat{W}_i-W^*_i)^T\hat{W}_i\\
&=\hat{W}_i^T\hat{W}_i-W_i^{*T}(\tilde{W}_i+W_i^*)\\
&=\hat{W}_i^T\hat{W}_i-W_i^{*T}W_i^{*}-(\hat{W}_i-\tilde{W}_i)^T\tilde{W}_i\\
&=\hat{W}_i^T\hat{W}_i-W_i^{*T}W_i^{*}+\tilde{W}_i^T\tilde{W}_i-\tilde{W}_i^T\hat{W}_i.
\end{aligned}
\end{equation*}

Hence, it can be obtained that
\begin{equation}
\tilde{W}_i^T\hat{W}_i=\frac{1}{2}(\hat{W}_i^T\hat{W}_i-W_i^{*T}W_i^{*}+\tilde{W}_i^T\tilde{W}_i)\geq0,
\end{equation}
in which we have utilized  the conclusions that $\hat{W}_i^T\hat{W}_i=W_{\text{max}}\geq W_i^{*T}W_i^{*}$ and $\tilde{W}_i^T\tilde{W}_i\geq0$.

Therefore,
\begin{equation}
{\tilde{W}_i}^T\left(\frac{\dot{\hat{W}}_i}{\beta}-S_i(\mathbf{y}_i)\bar{v}_i\right)\leq0
\end{equation}
for $i\in\mathbb{N}_s.$

Summarizing the above two cases, it can be obtained that
for  each $i\in\mathbb{N}_s,$
\begin{equation}
{\tilde{W}_i}^T\left(\frac{\dot{\hat{W}}_i}{\beta}-S_i(\mathbf{y}_i)\bar{v}_i\right)\leq0.
\end{equation}

By similar arguments, it can be easily obtained that
\begin{equation}
{\tilde{W}_i}^T\left(\frac{\dot{\hat{W}}_i}{\beta}-S_i(\mathbf{y}_i)(x_i-z_i)\right)\leq0,
\end{equation}
for $i\in\mathbb{N}_f$.
\subsection{Proof of Lemma \ref{lemma45}}\label{lemma45_proof}

For each $i\in\mathbb{N}_f,$ we have
\begin{equation}
\begin{aligned}
&(x_i-z_i)(d_i(t)+\varepsilon_i-\phi_i)\\
\leq&|x_i-z_i|| d_i(t)+\varepsilon_i |-(x_i-z_i)^T\phi_i\\
\leq&\delta| x_i-z_i|-\delta(x_i-z_i)\text{tanh}\left(\frac{\mathcal{K}\delta(x_i-z_i)}{\epsilon}\right)\\
\leq&\epsilon,
\end{aligned}
\end{equation}
by Lemma \ref{lemma3}. By similar arguments, \eqref{bds2} can be obtained.

\end{document}